\documentclass[12,reqno]{amsart}

\usepackage{amsfonts}
\usepackage[active]{srcltx}
\usepackage{amsmath}
\usepackage{amssymb}
\usepackage{amsthm}
\usepackage{mathtools}
\usepackage{bm} 
\usepackage{empheq}
\usepackage{upgreek} 

\usepackage{enumitem}
\usepackage{wasysym}
\usepackage{verbatim}
\usepackage{graphicx}
\usepackage[
bookmarks=true,         
bookmarksnumbered=true, 
colorlinks=true, pdfstartview=FitV, linkcolor=blue, citecolor=blue,
urlcolor=blue]{hyperref}
\usepackage{microtype}
\setcounter{MaxMatrixCols}{10}
\theoremstyle{plain}\newtheorem{theorem}{Theorem}[section]
\newtheorem{prop}[theorem]{Proposition}

\newtheorem{lemma}[theorem]{Lemma}

\renewenvironment{proof}[1][Proof]{\textbf{#1.} }{\ \rule{0.5em}{0.5em} \par }
\theoremstyle{remark}

\theoremstyle{definition}
\newtheorem{remark}[theorem]{Remark}

\def\EE{\mathbb{E}}

\def\be{{\beta}}

\def\si{{\sigma}}

\def\al{{\alpha}}

\def\be{{\beta}}

\def\ga{{\gamma}}

\def\si{{\sigma}}

\def\th{{\theta}}
\def\th{{\theta}}
\def\si{{\sigma}}
\def\al{{\alpha}}




\setcounter{equation}{0}
\renewcommand{\theequation}{\arabic{section}.\arabic{equation}}

\renewcommand{\theequation}{\arabic{section}.\arabic{equation}}
\let\Section=\section
\def\section{\setcounter{equation}{0}\Section}

\begin{document}
\subjclass[2000]{     } 
\keywords{Fractional Brownian motion, Fractional Ornstein-Uhlenbeck, Parameter estimation, Malliavin calculus, Ergodicity, Stationary processes, Newton method, Central limit theorem.
  }
\thanks{Supported by   NSERC discovery grant}

\title[Estimation of   all parameters in fractional  
OU model]{Estimation Of all parameters in the Fractional  
Ornstein-Uhlenbeck model  under 
 discrete observations
}
\author{El Mehdi Haress }
\address{
     University of Paris-Saclay, Gif-sur-Yvette, France}
\email{el-mehdi.haress@student.ecp.fr
 }  
\author{  Yaozhong Hu}
\address{Department of Mathematical and  Statistical
 Sciences, University of Alberta at Edmonton,
Edmonton, Canada, T6G 2G1}
  \email{ yaozhong@ualberta.ca
}

  
\date{} 

\maketitle
\begin{abstract}
   Let the  Ornstein-Uhlenbeck process $(X_t)_{t\ge0}$ driven by a fractional Brownian motion $B^{H }$, described    by   $dX_t = -\theta  X_t dt + \sigma dB_t^{H }$ be observed at discrete time instants
   $t_k=kh$, $k=0, 1, 2, \cdots, 2n+2 $.  We propose   ergodic type 
statistical estimators $\hat \theta_n $, $\hat H_n $ and $\hat \sigma_n $ to estimate all  the parameters $\theta $, $H $ and $\sigma $ in the above 
Ornstein-Uhlenbeck  model  simultaneously.  We prove the strong consistence  
and the rate of convergence of   the estimators.   The step size $h$
can be arbitrarily  fixed and will not be forced to go zero, which is usually a reality.   The tools to use 
are the generalized moment  approach (via ergodic theorem) and the Malliavin calculus. 
\end{abstract}

\section{Introduction}
The Ornstein-Uhlenbeck process $(X_t)_{t\ge0}$ is described by the following Langevin equation:
\begin{equation}
    dX_t = -\theta  X_t dt+ \sigma  \text{d} B_t^{H }\,,\label{e.1.1}
\end{equation}
where $\theta>0$ so that the process is ergodic  and where  for simplicity of the presentation we   assume $X_0 = 0$. 
Other initial value can be treated   exactly in  the same way.   
We assume that  the process $(X_t)_{t\ge0}$ is observed at discrete time instants $t_k=kh $ and we want to use the observations
$\{X_{h}, X_{2h}, \cdots, X_{2n+2)h}\}$ to estimate the parameters
$\theta$, $H$ and $\sigma$ appeared in the above Langevin equation simultaneously. 

Before we continue let us briefly recall some recent relevant works  obtained in literature. 
Most of the works are concerned with the estimator 
of the drift parameter $\theta$.  
When   the Ornstein-Uhlenbeck process $(X_t)_{t\ge0}$ can be 
    observed continuously  and when the parameters $\sigma$ and $H$ are assumed to be known,   we have the following works.    
\begin{itemize}
    \item[1.]  The maximum likelihood estimator for $\theta $ defined by $\theta_T^{{\rm mle}}$
    is studied  \cite{tudor2007statistical}
    (see also the references therein for earlier references),  and is proved  to be strongly consistent. The asymptotic behavior of the bias and the mean square of $\theta_T^{{\rm mle}}$ is also given. In this paper, a strongly consistent estimator of $\sigma $ is also proposed. 
    \item[2.] A least squares estimator defined by $\Tilde{\theta}_T = \frac{-\int_0^T  X_t dX_t}{\int_0^T X_t^2}$ was studied
    in   \cite{chenhuwang,hu2010parameter,hunualartzhou}. It is proved that $\Tilde{\theta}_T \rightarrow \theta $ almost surely as $T \rightarrow \infty$.
    It is also proved  that when $H<\le 3/4$, $\sqrt{T}(\Tilde{\theta}_T-\theta )$ converges in law to a mean zero normal random variable. The variance of this normal variable is also obtained. When $H\ge 3/4$, the rate of convergence is also known
    \cite{hunualartzhou}. 
\end{itemize}

Usually in reality the process can only be  observed at discrete times 
$\{t_k=kh, k= 1, 2, \cdots, n\}$ for some fixed observation 
time lag $h>0$.  In this very interesting case,   there are
  very limited works. Let us only mention two
(\cite{hu2013parameter,panloup2019general}).  
To the best of knowledge there is only one work  \cite{brouste2013parameter} to estimate
all the parameters $\theta$, $H$ and $\si$ in the same time, where the observations are assumed to be made 
continuously.  

The  diffusion coefficient   $\si$   represents the 
``volatility"  and it is commonly believed that it should be
computed (hence estimated) by the $1/H$ variations
(see \cite{hunualartzhou} and references therein).  To use the $1/H$ variations   one has to 
assume the process can be observed continuously
(or we have high frequency data).  Namely, it is a common belief that $\sigma$ can only be estimated when one has high frequency data. 

In this work, we assume that the process can only be  observed at discrete times 
$\{t_k=kh, k= 1, 2, \cdots, n\}$ for some fixed observation 
time lag $h>0$ (without the requirement that  $h\rightarrow 0$). We want to estimate
$\theta$, $H$ and $\si$ simultaneously. The idea we use is the ergodic theorem, namely,
we find the explicit form of the 
limit  distribution  of $\frac{1}{n} \sum_{k=1}^n f(X_{kh}) $ and use 
it to estimate our parameters.  People may naturally think that 
if we appropriately choose three different $f$, then we may obtain three
different equations to obtain all the three parameters 
$\theta$, $H$ and $\si$.

However, this is impossible since   as long as we proceed this way, we shall find out that whatever we choose $f$, we cannot get independent 
equations. Motivated by a recent work \cite{chenghulong}, we may try to
 add 
the limit distribution of $\frac{1}{n}\sum_{k=1}^n g(X_{kh}, X_{(k+1)h})  $  to find 
all the parameters. However, this is still impossible because 
regardless how we choose $f$ and $g$ we obtain only two 
independent equations.  This is because regardless how we choose $f$ and $g$ the limits depends only on the covariance of the limiting Gaussians (see $Y_0$ and $Y_h$ ulteriorly). Finally, we propose to use the 
following quantities to estimate  all the three parameters 
$\theta$, $H$ and $\si$:
%
\begin{equation}\label{1.2}
    \frac{\sum_{k=1}^n X_{kh}^2}{n},\quad  \frac{\sum_{k=1}^n X_{kh}X_{(kh+h}}{n}, \quad \frac{\sum_{k=1}^n X_{2kh} X_{2kh+ 2h}}{n}.
\end{equation}
We shall study the strong consistence and joint limiting law of our estimators. 

 The paper is organized as follows. In Section \ref{sec:2}, we
 recall some known results. The construction and the strong consistency of the estimators are provided in Section \ref{sec:3}.  Central limit theorems are obtained in Section \ref{sec:4}. 
 To make the paper more readable, we delay some proofs 
 in Append \ref{A}. To use our estimators we need 
 the determinant of some functions to be nondegenerate. 
 This is given in Appendix 
 \ref{B}.  Some numerical 
 simulations to validate our estimators are illustrated
  in Appendix \ref{C}.

\section{Preliminaries}\label{sec:2}

%

Let    $(\Omega,\mathcal{F},\mathbb{P})$ be a complete probability space. The expectation on this space is denoted by $\EE $. The fractional Brownian motion   $(B_t^H, t\in \mathbb{R})$ with Hurst parameter $H\in (0,1)$  is a zero mean Gaussian process with the
following covariance structure:
\begin{equation}
    \EE (B_t^H B_s^H) = R_H(t,s) = \frac{1}{2}(\mid t \mid^{2H} + \mid s \mid^{2H} + \mid t-s \mid^{2H} ).
\end{equation}
On stochastic analysis of this fractional Brownian motion,
such as stochastic integral
$\int_a^b f(t) dB_t^H$, chaos expansion,  and stochastic differential equation
$dX_t=b(X_t)dt+\si(X_t) dB_t^H$ we refer to  \cite{bhoz}. 

For any $s, t\ge 0$, we define 
\begin{equation}
\langle I_{[0, t]}, I_{[0, s]}\rangle _{\mathcal{H}}
= R_H(s,t)\,.
\end{equation}
We can   first extend this scalar product 
to general elementary functions
$f(\cdot)=\sum_{i=1}^n a_i I_{[0, s_i]}(\cdot)$ by (bi-)linearity  
and then to general function by a limiting argument. 
We can then obtain  the reproducing kernel Hilbert space,
denoted by $ \mathcal{H}$,  
associated with this Gaussian process $B_t^H$ 
(see e.g. \cite{hu2010parameter} for more details).  
%
%




Let $\mathcal{S}$ be the space of smooth and cylindrical random variables of the form
$$F = f(B^H(\phi_1),....,B^H(\phi_n)), \quad \phi_1,...,\phi_n \in C^\infty_0([0,T]),  $$
where $f\in C_b^{\infty}(\mathbf{R}^n)$
and $B^H(\phi)=\int_0^\infty \phi(t) dB_t^H$. For such a variable $F$, we define its Malliavin derivative as the $\mathcal{H}$ valued random element:

$$DF = \sum_{k=1}^n \frac{\partial f}{\partial x_i}(B^H(\phi_1),...,B^H(\phi_n))\phi_i . $$

%


%

We shall use the following result in Section \ref{sec:4} to obtain the central
limit theorem. We refer to \cite{hubook} and many other references 
for a proof. 
\begin{prop}\label{2.2}  Let $\{F_n, n \ge 1\}$ be a sequence of random variables in the space of $p$th Wiener Chaos, $p\ge 2$ ,such that $\lim_{n\rightarrow \infty} \EE (F_n^2) = \sigma^2$. Then the following statements are equivalent: 
\begin{itemize}
    \item[(i)] $F_n$ converges in law to $N(0,\sigma^2)$ as $n$ tends to infinity.
    \item[(ii)] $\|DF_n\|_{\mathcal{H}}^2 $ converges in $L^2$ to a constant as n tends to infinity.  
\end{itemize}
\end{prop}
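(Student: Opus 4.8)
\medskip
\noindent\textbf{Proof proposal.}
This is the Nualart--Peccati fourth moment theorem, and the plan is to run the Malliavin calculus argument in the two directions separately. I would write $F_n = I_p(f_n)$ with $f_n$ a symmetric element of $\mathcal{H}^{\otimes p}$ and rely on three structural facts: $F_n$ is an eigenfunction of the Ornstein--Uhlenbeck operator, $LF_n = -pF_n$, so that $-DL^{-1}F_n = \tfrac1p DF_n$; the Malliavin integration by parts (duality) formula $\EE[F_n G] = \EE\big[\langle DG,\, -DL^{-1}F_n\rangle_{\mathcal{H}}\big]$ for smooth $G$; and its elementary consequence $\EE\big[\tfrac1p\|DF_n\|_{\mathcal{H}}^2\big] = \EE[F_n^2]$. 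The case $\sigma^2 = 0$ is immediate in both directions: $\EE[F_n^2]\to 0$ forces $F_n\to 0$ in $L^2$ hence in law, while $\tfrac1p\EE\|DF_n\|_{\mathcal{H}}^2 = \EE[F_n^2]\to 0$ together with hypercontractivity on the finitely many chaoses carrying $\|DF_n\|_{\mathcal{H}}^2$ gives $\|DF_n\|_{\mathcal{H}}^2\to 0$ in $L^2$; so from now on assume $\sigma^2>0$.

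For (ii)\,$\Rightarrow$\,(i) I would apply the duality formula to $G = e^{\mathrm{i}tF_n}$, obtaining for the characteristic function $\varphi_n(t):=\EE[e^{\mathrm{i}tF_n}]$ the identity
\begin{equation*}
\varphi_n'(t) \;=\; \mathrm{i}\,\EE\big[F_n e^{\mathrm{i}tF_n}\big] \;=\; -\,t\,\EE\Big[e^{\mathrm{i}tF_n}\,\tfrac1p\|DF_n\|_{\mathcal{H}}^2\Big].
\end{equation*}
Hypothesis (ii) says $\tfrac1p\|DF_n\|_{\mathcal{H}}^2 \to c$ in $L^2$, and taking expectations identifies $c = \lim_n \EE[F_n^2] = \sigma^2$. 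Since $\{F_n\}$ is tight (bounded in $L^2$), along any subsequence $\varphi_n\to\varphi$ pointwise with $\varphi$ a characteristic function; passing to the limit in the displayed equation (for which $L^1$ convergence of $\tfrac1p\|DF_n\|_{\mathcal{H}}^2$ to $\sigma^2$ is enough) gives $\varphi'(t) = -\sigma^2 t\,\varphi(t)$ with $\varphi(0)=1$, hence $\varphi(t) = e^{-\sigma^2 t^2/2}$. Thus $F_n$ converges in law to $N(0,\sigma^2)$. (Alternatively one can run Stein's method, taking $G = g(F_n)$ with $g$ solving the Stein equation, to get the quantitative estimate $d_{\mathrm{TV}}\big(F_n, N(0,\sigma^2)\big) \le C\,\EE\big|\sigma^2 - \tfrac1p\|DF_n\|_{\mathcal{H}}^2\big|$.)

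For (i)\,$\Rightarrow$\,(ii) I would first upgrade convergence in law to convergence of moments: because $F_n$ lives in the fixed $p$th chaos, hypercontractivity makes all $L^q$-norms of $F_n$ comparable, so the families $\{F_n^k\}$ are uniformly integrable and $\EE[F_n^4]\to 3\sigma^4$, the fourth moment of $N(0,\sigma^2)$. Then I would apply the product formula for multiple Wiener integrals to expand both $F_n^2$ and $\|DF_n\|_{\mathcal{H}}^2$ into their Wiener chaos components in terms of the contractions $f_n\otimes_r f_n$, $1\le r\le p-1$; this yields
\begin{align*}
\EE[F_n^4] - 3\big(\EE[F_n^2]\big)^2 &\;=\; \sum_{r=1}^{p-1} a_{p,r}\,\big\|f_n\,\widetilde\otimes_r f_n\big\|_{\mathcal{H}^{\otimes(2p-2r)}}^2, \\
\var\!\Big(\tfrac1p\|DF_n\|_{\mathcal{H}}^2\Big) &\;=\; \sum_{r=1}^{p-1} b_{p,r}\,\big\|f_n\,\widetilde\otimes_r f_n\big\|_{\mathcal{H}^{\otimes(2p-2r)}}^2,
\end{align*}
with the same contraction norms on both sides and strictly positive combinatorial constants $a_{p,r},b_{p,r}$. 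Consequently $\var\big(\tfrac1p\|DF_n\|_{\mathcal{H}}^2\big) \le C_p\big(\EE[F_n^4] - 3(\EE[F_n^2])^2\big) \to 3\sigma^4 - 3\sigma^4 = 0$, and since $\EE\big[\tfrac1p\|DF_n\|_{\mathcal{H}}^2\big] = \EE[F_n^2]\to\sigma^2$ we conclude $\tfrac1p\|DF_n\|_{\mathcal{H}}^2\to\sigma^2$ in $L^2$, which is (ii).

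The main obstacle is this last step: carrying out the product-formula bookkeeping to derive the two chaos expansions and, above all, verifying that the constants $a_{p,r}$ and $b_{p,r}$ are strictly positive, so that the contraction norms $\|f_n\,\widetilde\otimes_r f_n\|$ are simultaneously dominated by $\EE[F_n^4]-3(\EE[F_n^2])^2$ and control $\var(\tfrac1p\|DF_n\|_{\mathcal{H}}^2)$. Symmetrizing the contractions and tracking the multiplicities of the chaoses of orders $0,2,\dots,2p-2$ is where the real effort lies; by contrast the uniform integrability used to pass from convergence in law to moment convergence is a routine consequence of hypercontractivity in a fixed chaos. Complete details are in \cite{hubook}.
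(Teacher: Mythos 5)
The paper offers no proof of Proposition \ref{2.2}; it simply cites \cite{hubook}, so there is no in-paper argument to compare yours against. Your outline is the standard proof of this result (the Nualart--Peccati fourth moment theorem together with the Nualart--Ortiz-Latorre derivative criterion): the characteristic-function ODE obtained from the duality formula and $-DL^{-1}F_n=\tfrac1p DF_n$ handles (ii) $\Rightarrow$ (i), and the product-formula expansions of $F_n^2$ and $\|DF_n\|_{\mathcal H}^2$ in terms of contractions, combined with hypercontractivity to upgrade convergence in law to $\EE[F_n^4]\to 3\sigma^4$, handle (i) $\Rightarrow$ (ii). This is correct as a sketch, and it is presumably the argument contained in the cited reference. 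One small imprecision: the two expansions do not involve literally ``the same contraction norms'' --- the usual fourth-cumulant identity reads
\begin{equation*}
\EE[F_n^4]-3\big(\EE[F_n^2]\big)^2=(p!)^2\sum_{r=1}^{p-1}\binom{p}{r}^2\big\|f_n\otimes_r f_n\big\|^2+\sum_{r=1}^{p-1}(r!)^2\binom{p}{r}^4(2p-2r)!\,\big\|f_n\,\widetilde\otimes_r f_n\big\|^2,
\end{equation*}
with both unsymmetrized and symmetrized contractions, whereas $\var\big(\tfrac1p\|DF_n\|_{\mathcal H}^2\big)$ involves only the symmetrized ones --- but since every term is nonnegative the domination $\var\big(\tfrac1p\|DF_n\|_{\mathcal H}^2\big)\le C_p\big(\EE[F_n^4]-3(\EE[F_n^2])^2\big)$ goes through exactly as you claim.
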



\section{Estimators of $\theta $,$H $ and $\sigma $}\label{sec:3}
If $X_0 = 0$, then the solution $X_t$ 
to \eqref{e.1.1} can be expressed as  
\begin{equation}
    X_t = \sigma  \int_0^t e^{-\theta (t-s)} \text{d}B^{H }_s.
    \label{e.3.1}
\end{equation}
The associated stationary solution, the solution of \eqref{e.1.1} with the the initial 
value 
\begin{equation}
Y_0=\int_{-\infty}^0 e^{ \theta s}\text{d}B_s \,, 
\end{equation} 
can be expressed as 
\begin{equation}
Y_t = \int_{-\infty}^t e^{-\theta(t-s)}\text{d}B_s
=e^{-\theta t} Y_0+X_t\label{e.3.3} 
\end{equation}
 and has the same distribution as 
  the limiting normal distribution of $X_t$
  (when $t\rightarrow \infty$). 
Let's consider the following two  quantities :
\begin{equation}
\left\{
    \begin{array}{lll}
    \eta_n =  \frac{1}{n}\sum_{k=1}^n X_{kh}^2, 
      \\
    \eta_{h, n} =  \frac{1}{n}\sum_{k=1}^n X_{kh}X_{ kh+ h}\,,
   \\
 \eta_{2h, n} =  \frac{1}{n}\sum_{k=1}^n X_{2kh}X_{ 2kh+ 2h}   \,.  
  \end{array} \label{e.eta}
\right.
\end{equation}

%
As in \cite{cheridito2003fractional} or \cite{hu2013parameter}, we have 
the following ergodic result: 
\begin{equation}
\lim_{n\rightarrow\infty} \eta_n = \EE (Y_0^2) = \sigma ^2 \frac{\Gamma(2H +1) \sin   (\pi H )}{2\pi} \int_{-\infty}^{\infty} \frac{\mid x\mid^{1-2H }}{\theta ^2 + x^2}dx\,.
\label{e.3.6}
\end{equation} 
Now we 
want to have a similar result for $\eta_{h,n}$.  
First, let's study the ergodicity of the processes $\{Y_{t+h}-Y_t\}_{t \ge 0}$.   
 According to \cite{magdziarz2011ergodic},    a centered Gaussian wide-sense stationary process $M_t$ is ergodic if $\EE (M_t M_0) \rightarrow 0$ as $t$ tends to infinity.  
We shall apply this result to  $M_t=Y_{t+h}-Y_t\,, t \ge 0$.
Obviously, it  is a centered Gaussian stationary process and 
\[
\EE ((Y_{t+h}-Y_t)(Y_{h}-Y_0)) = \EE (Y_{t+h}Y_{h}) - \EE (Y_{t+h}Y_0) - \EE (Y_t Y_h) + \EE (Y_tY_0)\,.
\]
In  \cite[Theorem 2.3]{cheridito2003fractional},  it is proved  that $\EE (Y_tY_0) \rightarrow 0$ as $t$ goes to infinity. Thus, it is easy to see that $\EE ((Y_{t+h}-Y_t)(Y_{h}-Y_0)) \rightarrow 0$.   
Hence, we see that the process $\{Y_{t+h}-Y_t\}_{t \ge 0}$ is ergodic. 
This implies 
\[
\frac{\sum_{k=1}^n \large{[}Y_{(k+1)h} - Y_{kh}\large{]^2}}{n} \rightarrow_{n\rightarrow\infty} \EE ([Y_h - Y_0]^2)\,. 
\]

This combined with \eqref{e.3.6} yields  the following Lemma.  
\begin{theorem}\label{t.3.1} Let $\eta_n$,   $\eta_{h,n}$   and 
$\eta_{2h,n}$
be defined by \eqref{e.eta}. Then
  as $n\rightarrow \infty$  we have 
\begin{eqnarray}
\lim_{n\rightarrow\infty} \eta_n &=& \EE (Y_0^2) = \sigma ^2 \frac{\Gamma(2H +1) \sin   (\pi H )}{2\pi} \int_{-\infty}^{\infty} \frac{\mid x\mid^{1-2H }}{\theta ^2 + x^2}dx\,;\label{e.3.a6}\\ 
 \lim_{n\rightarrow\infty} \eta_{h,n}
 &=& \EE (Y_0Y_h) = \sigma ^2 \frac{\Gamma(2H +1) \sin   (\pi H )}{2\pi} \int_{-\infty}^{\infty} e^{ixh} \frac{\mid x\mid^{1-2H }}{\theta ^2 + x^2}dx\,; 
\label{e.3.a7}\\
\lim_{n\rightarrow\infty} \eta_{2h,n}
 &=& \EE (Y_0Y_{2h} ) = \sigma ^2 \frac{\Gamma(2H +1) \sin   (\pi H )}{2\pi} \int_{-\infty}^{\infty} e^{2ixh} \frac{\mid x\mid^{1-2H }}{\theta ^2 + x^2}dx\,. 
 \nonumber\\ 
 \label{e.3.a8}
\end{eqnarray}  
\end{theorem}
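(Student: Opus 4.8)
The first identity \eqref{e.3.a6} is simply \eqref{e.3.6}, quoted from \cite{cheridito2003fractional}, so the plan is to reduce \eqref{e.3.a7} and \eqref{e.3.a8} to it. First I would pass from the observed process to the stationary one. Using \eqref{e.3.3} in the form $X_t=Y_t-e^{-\theta t}Y_0$, the differences
\[
\frac1n\sum_{k=1}^n X_{kh}X_{(k+1)h}-\frac1n\sum_{k=1}^n Y_{kh}Y_{(k+1)h}
\quad\text{and}\quad
\frac1n\sum_{k=1}^n X_{2kh}X_{2(k+1)h}-\frac1n\sum_{k=1}^n Y_{2kh}Y_{2(k+1)h}
\]
are Cesàro averages of terms bounded in absolute value by $Ce^{-\theta kh}\big(|Y_{kh}|+|Y_{(k+1)h}|+|Y_0|\big)$. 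Since $\sum_k e^{-\theta kh}<\infty$ and, by the Gaussian tail bound together with Borel--Cantelli, $|Y_{kh}|$ grows at most like $\sqrt{\log k}$ almost surely, each such series converges a.s., hence its Cesàro average vanishes; the same holds for $\eta_n$ versus $\frac1n\sum_k Y_{kh}^2$. Thus it suffices to compute the limits of the $Y$-averages.

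Next I would use polarization. Writing
\[
Y_{kh}Y_{(k+1)h}=\tfrac12\Big(Y_{kh}^2+Y_{(k+1)h}^2-(Y_{(k+1)h}-Y_{kh})^2\Big),
\]
the ergodic theorem for the stationary Gaussian process $Y$ (ergodic because $\EE(Y_tY_0)\to0$, by \cite{cheridito2003fractional,magdziarz2011ergodic}) gives $\frac1n\sum_{k=1}^n Y_{kh}^2\to\EE(Y_0^2)$ and $\frac1n\sum_{k=1}^n Y_{(k+1)h}^2\to\EE(Y_0^2)$, while the ergodicity of $\{Y_{t+h}-Y_t\}_{t\ge0}$ established just above the statement gives $\frac1n\sum_{k=1}^n(Y_{(k+1)h}-Y_{kh})^2\to\EE((Y_h-Y_0)^2)$. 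Hence
\[
\lim_{n\to\infty}\eta_{h,n}=\EE(Y_0^2)-\tfrac12\EE\big((Y_h-Y_0)^2\big)=\EE(Y_0Y_h).
\]
The same argument applied to the $2h$-sampled stationary sequence $(Y_{2kh})_{k\ge1}$ and to the ergodic increment process $\{Y_{t+2h}-Y_t\}_{t\ge0}$ yields $\lim_{n\to\infty}\eta_{2h,n}=\EE(Y_0Y_{2h})$.

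Finally I would make the two right-hand sides explicit. From the moving-average representation $Y_t=\sigma\int_{-\infty}^t e^{-\theta(t-s)}\,\mathrm dB_s^H$ together with the spectral representation of fractional Brownian motion, the stationary process $Y$ has spectral density
\[
f_Y(x)=\sigma^2\,\frac{\Gamma(2H+1)\sin(\pi H)}{2\pi}\cdot\frac{|x|^{1-2H}}{\theta^2+x^2},
\]
so that $\EE(Y_0Y_t)=\int_{-\infty}^\infty e^{ixt}f_Y(x)\,\mathrm dx$ for every $t$; this is exactly the computation underlying \eqref{e.3.6} in \cite{cheridito2003fractional}, now evaluated at $t=h$ and $t=2h$ instead of $t=0$, which gives \eqref{e.3.a7} and \eqref{e.3.a8}. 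I expect the only mildly delicate point to be the almost-sure (rather than $L^2$) control of $|Y_{kh}|$ needed in the first step to discard the exponentially small cross terms pathwise; everything else is bookkeeping on top of the increment ergodicity already in place and the spectral formula quoted from \cite{cheridito2003fractional}.
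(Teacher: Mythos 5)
Your proposal is correct and follows essentially the same route as the paper: the authors likewise establish ergodicity of the increment process $\{Y_{t+h}-Y_t\}$ via the criterion of \cite{magdziarz2011ergodic} and the decay of $\EE(Y_tY_0)$ from \cite{cheridito2003fractional}, and then combine the limit of $\frac1n\sum(Y_{(k+1)h}-Y_{kh})^2$ with \eqref{e.3.6} by exactly the polarization identity you write out, together with the spectral formula for $\EE(Y_0Y_t)$. Your explicit Cesàro/Borel--Cantelli argument for discarding the $e^{-\theta t}Y_0$ correction when passing from $X$ to $Y$ is a detail the paper leaves implicit, but it is the natural way to justify that step and does not change the method.
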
 
From the above theorem we propose the following  construction for the
estimators of the parameters $\th$, $H$ and $\si$.

First let us define 
\begin{equation}
\left\{
    \begin{array}{lll}
        f_1(\sigma,\theta,H) := \frac{1}{2\pi}\sigma^2\Gamma(2H+1)\sin   (\pi H)\int_0^{\infty} \frac{x^{1-2H}}{\theta^2+x^2}dx\,;\\
        f_2(\sigma,\theta,H) := \frac{1}{2\pi}\sigma^2\Gamma(2H+1)\sin   (\pi H)\int_0^{\infty} \cos(hx)\frac{x^{1-2H}}{\theta^2+x^2}dx\,;  \\
        f_3(\sigma,\theta,H) :=  \frac{1}{2\pi}\sigma^2\Gamma(2H+1)\sin   (\pi H\int_0^{\infty} \cos(2hx)\frac{x^{1-2H}}{\theta^2+x^2}dx  
    \end{array}
\right.  \label{e.3.9} 
\end{equation}
and let   $f(\theta,H,\sigma) =(f_1(\sigma,\theta,H), f_2(\sigma,\theta,H), f_3(\sigma,\theta,H))^T$. 

Then we set 
\begin{equation}
\left\{
    \begin{array}{lll}
        f_1(\sigma,\theta,H)  =  \eta_n=   \frac{1}{n}\sum_{k=1}^n X_{kh}^2 \,;\\
        f_2(\sigma,\theta,H)  = \eta_{h, n} =\frac{1}{n}\sum_{k=1}^n X_{kh}X_{ kh+ h}  \,;  \\
        f_3(\sigma,\theta,H) =  \eta_{2h, n} =\frac{1}{n}\sum_{k=1}^n X_{2kh}X_{ 2kh+ 2h}  \,. 
    \end{array}
\right.   \label{e.3.10} 
\end{equation}
This is a system of three equations for three unknowns
$(\th, H, \si)$. If  the determinant of the Jacobian (of $f$) 
\begin{equation}
J(\th, H, \si)= \left(\begin{matrix}
\frac{\partial f_1(\th, H, \si)}{\partial \th}
&\frac{\partial f_1(\th, H, \si)}{\partial H}&\frac{\partial f_1(\th, H, \si)}{\partial \si}\\
 \frac{\partial f_2(\th, H, \si)}{\partial \th}
&\frac{\partial f_2(\th, H, \si)}{\partial H}&\frac{\partial f_2(\th, H, \si)}{\partial \si}\\
\frac{\partial f_3(\th, H, \si)}{\partial \th}
&\frac{\partial f_3(\th, H, \si)}{\partial H}&\frac{\partial f_3(\th, H, \si)}{\partial \si}\\
\end{matrix}\right)\  \label{e.3.jacobian} 
\end{equation} 
is not zero at  $(\th_0, H_0, \si_0)$,  then 
the above equation \eqref{e.3.10} has a unique solution
$(\tilde \th_n , \tilde H_n , \tilde \si_n )$
in the neighborhood of $(\th_0, H_0, \si_0)$
(We will discuss  the determinant 
$J(\th, H, \si)$ in Appendix \ref{B}).  Namely,  
$(\tilde \th_n , \tilde H_n , \tilde \si_n )$ satisfies
\begin{equation*}
\left\{
    \begin{array}{lll}
        f_1(\tilde \th_n , \tilde H_n , \tilde \si_n)  =  \eta_n=   \frac{1}{n}\sum_{k=1}^n X_{kh}^2 \,;\\
        f_2(\tilde \th_n , \tilde H_n , \tilde \si_n)  = \eta_{h, n} =\frac{1}{n}\sum_{k=1}^n X_{kh}X_{ kh+ h}  \,;  \\
        f_3(\tilde \th_n , \tilde H_n , \tilde \si_n) =  \eta_{2h, n} =\frac{1}{n}\sum_{k=1}^n X_{2kh}X_{ 2kh+ 2h}  \,. 
    \end{array}
\right.    
\end{equation*}
Or we can write as
\begin{equation}
(\tilde \th_n , \tilde H_n , \tilde \si_n)^T=f^{-1}(\Upupsilon_n)\,,
\end{equation}
where $f^{-1}$  is the inverse function of $f$ (if it exists) and 
\begin{equation}
\Upupsilon_n=(\eta_n, \eta_{h, n} ,  \eta_{2h, n})^T\,. 
\end{equation}
We shall use $(\tilde \th_n , \tilde H_n , \tilde \si_n )$  to estimate  
the parameters 
$(  \th  ,   H  ,   \si  )$.   
 We call $(\tilde \th_n , \tilde H_n , \tilde \si_n )$  the ergodic 
 (or generalized moment) estimator of   
$(  \th  ,   H  ,   \si  )$.  
 
It seems hard to explicitly obtain the explicit solution 
of the system of  equation \eqref{e.3.10}.  However, it is a classical  algebraic 
equations. 
There are numerous 
numeric approaches to find the approximate solution.
We shall give some validation of our estimators
numerically in Appendix \ref{C}.

%
%
%


Since $f$ is a continuous function of $(\theta , H , \sigma) $
the inverse function $f^{-1}$ is also continuous if it exists. 
Thus we have the following  a strong consistency result which is an immediate consequence of Theorem \ref{t.3.1}. 
\begin{theorem}  Assume \eqref{e.3.10} has a unique solution
$(\tilde \th_n , \tilde H_n , \tilde \si_n )$.  Then $(\tilde \th_n , \tilde H_n , \tilde \si_n )$ converge almost surely to $(\theta , H , \sigma) $ respectively as $n$ tends to infinity.
\end{theorem}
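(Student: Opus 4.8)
The plan is to deduce the convergence directly from Theorem \ref{t.3.1} together with the local invertibility of $f$ at the true parameter value. Write $(\tilde \th_n , \tilde H_n , \tilde \si_n )^T = f^{-1}(\Upupsilon_n)$, where $\Upupsilon_n=(\eta_n, \eta_{h, n} ,  \eta_{2h, n})^T$ and $f^{-1}$ denotes the local inverse of $f$ near the true parameters $(\th_0, H_0, \si_0)$. By Theorem \ref{t.3.1} we have $\Upupsilon_n \to f(\th_0, H_0, \si_0)$ almost surely as $n\to\infty$, since each of the three coordinates converges almost surely to $f_1$, $f_2$, $f_3$ evaluated at $(\si_0,\th_0,H_0)$; this is the only probabilistic input and it is already established.

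First I would settle the domain issue. The functions $f_1,f_2,f_3$ in \eqref{e.3.9} depend smoothly on $(\th,H,\si)$ on the region $\{\th>0,\ H\in(0,1),\ \si>0\}$ (differentiation under the integral sign is justified there, as the integrands and their derivatives are dominated locally uniformly), so $f\in C^1$. Since the Jacobian determinant $J(\th_0,H_0,\si_0)\neq 0$ — this nondegeneracy is precisely what is proved in Appendix \ref{B} — the inverse function theorem furnishes an open neighborhood $U$ of $(\th_0,H_0,\si_0)$ and an open neighborhood $V$ of $f(\th_0,H_0,\si_0)$ such that $f\colon U\to V$ is a $C^1$-diffeomorphism; in particular $f^{-1}\colon V\to U$ is continuous.

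Next, because $\Upupsilon_n\to f(\th_0,H_0,\si_0)\in V$ on an almost sure event, on that event there is an integer $N$ with $\Upupsilon_n\in V$ for all $n\ge N$. The uniqueness hypothesis in the statement guarantees that for such $n$ the estimator $(\tilde \th_n , \tilde H_n , \tilde \si_n )$ coincides with the distinguished branch $f^{-1}(\Upupsilon_n)$. Applying continuity of $f^{-1}$ at $f(\th_0,H_0,\si_0)$ yields
\[
(\tilde \th_n , \tilde H_n , \tilde \si_n )^T = f^{-1}(\Upupsilon_n)\ \longrightarrow\ f^{-1}\bigl(f(\th_0,H_0,\si_0)\bigr) = (\th_0,H_0,\si_0)^T
\]
almost surely, which is the desired strong consistency.

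Everything after Theorem \ref{t.3.1} is soft, so there is no real obstacle; the one point worth stating carefully is the matching, for $n$ large, of the implicitly defined estimator with the single-valued branch $f^{-1}(\Upupsilon_n)$, which is exactly what the uniqueness assumption in the statement is there to supply, and the regularity plus nondegeneracy of $f$, which is handled by differentiation under the integral and by Appendix \ref{B} respectively.
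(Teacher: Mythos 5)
Your argument is correct and is essentially the paper's own proof: the paper likewise deduces the result as an immediate consequence of Theorem \ref{t.3.1} via continuity of $f^{-1}$. You simply spell out the details the paper leaves implicit (the inverse function theorem at $(\th_0,H_0,\si_0)$, the eventual membership of $\Upupsilon_n$ in the neighborhood where the local inverse is defined, and the role of the uniqueness hypothesis), which is a faithful elaboration rather than a different route.
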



\section{Central limit theorem}\label{sec:4}
In this section, we shall concern with the central limit theorem associated with our ergodic estimator 
 $(\tilde \th_n , \tilde H_n , \tilde \si_n )$.
We shall prove that  $\sqrt{n} (\tilde \th_n-\th , \tilde H_n -H, \tilde \si_n-\si )$  converge   in law to a mean zero normal vector.  

%

Let's  first consider the random variable $F_n$ defined by 
\begin{equation}
    F_n = \begin{pmatrix}
 \sqrt{n}(\eta_n - \EE (\eta_n)) \\
\sqrt{n}(\eta_{h,n} - \EE (\eta_{h,n})) \\
\sqrt{n}(\eta_{2h,n} - \EE (\eta_{2h,n}))  \\
\end{pmatrix}.
\end{equation}

Our first goal is to show that $F_n$ converges in law to a multivariate normal distribution using Proposition \ref{2.2}. So we consider a linear combination: 
\begin{equation}
    G_n = \alpha \sqrt{n}(\eta_n - \EE (\eta_n)) + \beta \sqrt{n}(\eta_{h,n} - \EE (\eta_{h,n})) + \gamma \sqrt{n}(\eta_{2h,n} - \EE (\eta_{2h,n})),
    \label{e.4.4} 
\end{equation}
and show that it converges to a normal distribution.  

We will use the following Feynman diagram formula \cite{hubook}, where 
interested readers can find a proof.  
\begin{prop}\label{p.feynman_diagram_0} Let $X_1,   X_2, X_3, X_4$ be   jointly Gaussian random variables with mean zero. Then  
\[
\EE (X_1 X_2 X_3 X_4) = \EE (X_1X_2)\EE (X_3X_4) +  \EE (X_1X_3)\EE (X_2X_4)+ \EE (X_1X_4)\EE (X_2X_3)\,.
\]
\end{prop}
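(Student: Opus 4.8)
The plan is to deduce this Wick-type identity directly from the structure of the Gaussian law of $(X_1,X_2,X_3,X_4)$, through its joint moment generating function. Write $\Sigma=(\sigma_{ij})_{1\le i,j\le4}$ for the covariance matrix, $\sigma_{ij}=\EE(X_iX_j)$. Since the vector is centered Gaussian, $\phi(t):=\EE\bigl(e^{\langle t,X\rangle}\bigr)=\exp\bigl(\tfrac12\,t^{T}\Sigma t\bigr)$ for every $t\in\RR^4$, and because all moments are finite one may differentiate under the expectation, so that
\[
\EE(X_1X_2X_3X_4)=\frac{\partial^4\phi}{\partial t_1\,\partial t_2\,\partial t_3\,\partial t_4}\Big|_{t=0}.
\]
If $\Sigma$ is singular this still applies, since both sides of the asserted identity are polynomials in the entries $\sigma_{ij}$ and agree on the open dense set of nondegenerate $\Sigma$; alternatively one replaces $X$ by $X+\ep Z$ with $Z$ an independent standard Gaussian vector and lets $\ep\downarrow0$ at the end.

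The computation is then just the extraction of one Taylor coefficient. Expanding $\phi(t)=\sum_{m\ge0}\tfrac1{m!}\bigl(\tfrac12\sum_{i,j}\sigma_{ij}t_it_j\bigr)^m$, each summand is homogeneous of degree $2m$, so only the term $m=2$ can contribute to the coefficient of the square-free monomial $t_1t_2t_3t_4$; that term equals $\tfrac18\bigl(\sum_{i,j}\sigma_{ij}t_it_j\bigr)^2$. Collecting the coefficient of $t_1t_2t_3t_4$ in $\bigl(\sum_{i,j}\sigma_{ij}t_it_j\bigr)^2=\sum_{i,j,k,l}\sigma_{ij}\sigma_{kl}\,t_it_jt_kt_l$ amounts to summing $\sigma_{ij}\sigma_{kl}$ over the $4!$ permutations $(i,j,k,l)$ of $(1,2,3,4)$; using $\sigma_{ij}=\sigma_{ji}$, each of the three pairings $\{12\,|\,34\}$, $\{13\,|\,24\}$, $\{14\,|\,23\}$ is hit exactly $8$ times, so that coefficient equals $8\bigl(\sigma_{12}\sigma_{34}+\sigma_{13}\sigma_{24}+\sigma_{14}\sigma_{23}\bigr)$. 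Multiplying by the prefactor $\tfrac18$ and recalling $\partial^4_{t_1t_2t_3t_4}(t_1t_2t_3t_4)=1$ yields exactly $\EE(X_1X_2X_3X_4)=\sigma_{12}\sigma_{34}+\sigma_{13}\sigma_{24}+\sigma_{14}\sigma_{23}$, which is the claim.

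A shorter alternative, closer to the Malliavin-calculus viewpoint used elsewhere in the paper, is to invoke the Gaussian integration-by-parts (Stein) identity: for a centered jointly Gaussian vector and $g\in C^1$ with $g$ and its partials of polynomial growth, $\EE\bigl(X_1\,g(X_1,\dots,X_4)\bigr)=\sum_{j=1}^4\EE(X_1X_j)\,\EE\bigl(\partial_j g(X_1,\dots,X_4)\bigr)$. Applying this with $g(x_1,x_2,x_3,x_4)=x_2x_3x_4$, so that $\partial_1g\equiv0$, $\partial_2g=x_3x_4$, $\partial_3g=x_2x_4$ and $\partial_4g=x_2x_3$, the identity follows in one line. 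Establishing that Stein identity in turn reduces to the one-dimensional fact $\EE\bigl(Zf(Z)\bigr)=\EE\bigl(f'(Z)\bigr)$ for $Z\sim\CN(0,1)$, combined with an orthogonal change of variables diagonalizing $\Sigma$.

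There is no genuine conceptual obstacle; the only point requiring care in the first route is the combinatorial bookkeeping — checking that the factor $\tfrac1{m!}$ at $m=2$, the $\tfrac12$ in the quadratic form, and the count of permutation classes balance so that each of the three pairings ends up with coefficient exactly $1$ — while in the second route one must only justify differentiating under the integral sign, which is guaranteed by the polynomial-growth hypothesis together with finiteness of Gaussian moments. The degenerate-covariance case is disposed of by the polynomial-identity remark above.
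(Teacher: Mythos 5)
Your proof is correct. Note, however, that the paper does not prove this proposition at all: it simply cites the reference \cite{hubook} for the Feynman diagram formula, so there is no in-paper argument to compare against. Your first route (extracting the coefficient of $t_1t_2t_3t_4$ from $\exp(\tfrac12 t^{T}\Sigma t)$) is the standard proof of Wick's theorem, and your combinatorial bookkeeping checks out: the $24$ ordered quadruples split into the three pairings with multiplicity $8$ each, which cancels the prefactor $\tfrac18$ exactly. Your second route via the Gaussian integration-by-parts identity with $g(x)=x_2x_3x_4$ is even closer in spirit to the Malliavin-calculus framework the paper uses elsewhere, and is arguably the cleaner of the two. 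One small remark: the detour through nondegenerate $\Sigma$ in the first route is unnecessary, since the identity $\EE(e^{\langle t,X\rangle})=\exp(\tfrac12 t^{T}\Sigma t)$ holds verbatim for degenerate centered Gaussian vectors; your $\ep$-regularization fallback is fine but not needed. (Phrasing the degenerate case as ``both sides are polynomials in $\sigma_{ij}$'' is slightly circular for the left-hand side, whose polynomial dependence on $\Sigma$ is part of what is being proved, so if you keep that remark, lean on the regularization or on the validity of the MGF formula instead.)
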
 
An immediate  consequence of this result  is
\begin{prop}\label{p.feynman_diagram} Let $X_1,   X_2, X_3, X_4$ be   jointly Gaussian random variables with mean zero. Then   
\begin{empheq}[left=\empheqlbrace]{align}   
 &\EE \left[ (X_1 X_2 -\EE(X_1X_2))(X_3 X_4-\EE (X_3 X_4))\right]\nonumber\\
 &\qquad\qquad  =\EE(X_1X_3)\EE(X_2X_4)+\EE(X_1X_4)\EE(X_2X_3)\,; \\
 &\EE \left[ (X_1 ^2 -\EE(X_1^2))(X_2 X_3-\EE (X_2 X_3))\right]   =2\EE(X_1X_2)\EE(X_1X_3) \,; \\
& \EE \left[ (X_1^2-\EE(X_1^2))(X_2^2  -\EE (X_2 ^2))\right]   =2\left[ \EE(X_1X_2)\right]^2\,.  
\end{empheq} 
\end{prop}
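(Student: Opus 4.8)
The statement to prove is Proposition~\ref{p.feynman_diagram}, which derives three identities about centered products of jointly Gaussian mean-zero variables from the Wick/Feynman diagram formula of Proposition~\ref{p.feynman_diagram_0}.

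\medskip

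\textbf{Plan of proof.} The plan is to deduce all three identities directly from Proposition~\ref{p.feynman_diagram_0} by expanding the centered products and using bilinearity of expectation; no probabilistic input beyond the four-point Wick formula and the elementary fact $\EE[(U-\EE U)(V-\EE V)] = \EE(UV) - \EE(U)\EE(V)$ is needed. First I would treat the generic case: take $U = X_1X_2$ and $V = X_3X_4$, so that
\begin{equation*}
\EE\left[(X_1X_2 - \EE(X_1X_2))(X_3X_4 - \EE(X_3X_4))\right] = \EE(X_1X_2X_3X_4) - \EE(X_1X_2)\EE(X_3X_4).
\end{equation*}
Applying Proposition~\ref{p.feynman_diagram_0} to the first term on the right gives the three pairings $\EE(X_1X_2)\EE(X_3X_4) + \EE(X_1X_3)\EE(X_2X_4) + \EE(X_1X_4)\EE(X_2X_3)$; the term $\EE(X_1X_2)\EE(X_3X_4)$ cancels against the subtracted product, leaving exactly $\EE(X_1X_3)\EE(X_2X_4) + \EE(X_1X_4)\EE(X_2X_3)$, which is the first claimed identity.

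\medskip

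The remaining two identities are then obtained by specialization. For the second identity I would set $X_3 = X_2$ (equivalently, relabel so that the second factor is $X_2X_3$ with $X_3$ replaced by a repeated index) — more precisely, in the first identity replace the pair $(X_3,X_4)$ by $(X_2,X_3)$ is not quite it; instead I would apply the first identity with the substitution $X_2 \mapsto X_1$, i.e. consider $(X_1^2 - \EE(X_1^2))(X_2X_3 - \EE(X_2X_3))$, which by the first identity equals $\EE(X_1X_2)\EE(X_1X_3) + \EE(X_1X_3)\EE(X_1X_2) = 2\EE(X_1X_2)\EE(X_1X_3)$. For the third identity, apply the second identity with $X_2 = X_3$ (relabeled), or directly substitute $X_3\mapsto X_2$ to get $(X_1^2 - \EE(X_1^2))(X_2^2 - \EE(X_2^2))$ has expectation $2\EE(X_1X_2)\EE(X_1X_2) = 2[\EE(X_1X_2)]^2$. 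Each specialization is legitimate because the first identity holds for \emph{any} jointly Gaussian mean-zero quadruple, and repeating a variable (e.g. taking $X_1 = X_2$ as the same random variable) keeps the quadruple jointly Gaussian.

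\medskip

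\textbf{Main obstacle.} There is essentially no obstacle: the proof is a two-line algebraic manipulation of the Wick formula together with two substitutions. The only point requiring the tiniest care is bookkeeping of indices in the specializations — making sure that when a variable is repeated, the three Wick pairings collapse to the correct multiplicities ($3 \to 2$ terms with a factor $2$ in one case, $3 \to 1$ term with a factor $2$ in the other, after the centering cancellation) — but this is routine and I would simply exhibit the cancellations explicitly for each of the three cases.
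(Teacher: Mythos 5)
Your proof is correct and is exactly the derivation the paper intends: the paper states Proposition \ref{p.feynman_diagram} as an ``immediate consequence'' of Proposition \ref{p.feynman_diagram_0} without spelling it out, and your expansion-plus-cancellation of the $\EE(X_1X_2)\EE(X_3X_4)$ term followed by the two specializations (repeating variables in the jointly Gaussian quadruple) is precisely that argument. No gaps; the slight hesitation in your index bookkeeping for the second identity resolves to the right substitution $(X_1,X_1,X_2,X_3)$.
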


\begin{theorem} \label{theorem4.3}
Let $H \in (0,\frac{1}{2})\cup(\frac{1}{2},\frac{3}{4})$. Let $X_t$ be the Ornstein-Uhlenbeck process defined by equation \eqref{e.1.1}   and let $\eta_n$, $\eta_{h,n}$, $\eta_{2h,n}$ 
be   defined by \eqref{e.eta}. Then
\begin{equation}
\begin{pmatrix}
 \sqrt{n}(\eta_n - \EE (\eta_n)) \\
\sqrt{n}(\eta_{h,n} - \EE (\eta_{h,n})) \\
\sqrt{n}(\eta_{2h,n} - \EE (\eta_{2h,n}))  \\
\end{pmatrix} \rightarrow N(0,\Sigma ),
\end{equation}  
where $\Sigma=\left(\Sigma(i,j)\right)_{1\le i,j\le 3}$ is a symmetric matrix whose elements are given by 
\begin{empheq}[left=\empheqlbrace]{align}   
 &\Sigma(1,1)=\Sigma(2,2)=\Sigma(3,3)=2\left[\EE(Y_0^2)\right]^2+4\sum_{m=0}^\infty  \left[\EE(Y_0Y_{mh})\right]^2\,; 
\label{e.4.9}\\
 & \Sigma(1,2)=\Sigma(2,1)=\Sigma(2,3) =\Sigma(3,2)=
 4\sum_{m=0}^\infty   \EE(Y_0Y_{mh}) \EE(Y_0Y_{(m+1)h}) \,;\label{e.4.10} \\
 & \Sigma(1,3)=    \Sigma(3,1)=
 4\sum_{m=0}^\infty   \EE(Y_0Y_{2mh}) \EE(Y_0Y_ {2mh+2h}) \,. 
 \label{e.4.11}
\end{empheq} 
\end{theorem}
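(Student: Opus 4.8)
### Proof Proposal

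The plan is to prove the multivariate CLT via the Cramér–Wold device combined with the fourth-moment theorem stated in Proposition \ref{2.2}. For a fixed triple $(\alpha,\beta,\gamma)\in\RR^3$, consider the linear combination $G_n$ defined in \eqref{e.4.4}. Each of $\eta_n$, $\eta_{h,n}$, $\eta_{2h,n}$ is a quadratic functional of the Gaussian process $X$, hence after centering it lies in the second Wiener chaos associated with $B^H$; therefore $G_n$ belongs to the second Wiener chaos and Proposition \ref{2.2} applies directly. The argument then splits into two parts: (a) identify $\lim_{n\to\infty}\EE(G_n^2)$ and show it exists and equals the quadratic form in $(\alpha,\beta,\gamma)$ associated with the candidate matrix $\Sigma$; (b) verify condition (ii) of Proposition \ref{2.2}, namely that $\|DG_n\|_{\mathcal H}^2$ converges in $L^2$ to a constant.

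For step (a) I would write $\EE(G_n^2)$ as a double sum over $k,\ell$ of covariances of products of the form $\EE[(X_{a}X_{b}-\EE(X_aX_b))(X_{c}X_{d}-\EE(X_cX_d))]$, then apply Proposition \ref{p.feynman_diagram} to express each term as a sum of products of two covariances $\EE(X_uX_v)$. The next step is to replace $X$ by the stationary version $Y$: since $X_t=Y_t-e^{-\theta t}Y_0$ and $\EE(Y_0^2)<\infty$, the difference $X_t-Y_t$ decays exponentially, so replacing $X_{kh}$ by $Y_{kh}$ in all sums changes $\EE(G_n^2)$ by $o(1)$ (the cross terms and the correction sums are summable uniformly in $n$). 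Using stationarity, $\EE(Y_{ih}Y_{jh})=\EE(Y_0Y_{|i-j|h})$, the double sum collapses to a single sum over the lag $m=|k-\ell|$ (respectively $m=|k-\ell|$ with the doubled step for the $\eta_{2h,n}$ contributions and the mixed terms), producing expressions of the type $\frac1n\sum_{k,\ell}[\EE(Y_0Y_{(k-\ell)h})]^2$, etc. Convergence of these Cesàro sums to $2[\EE(Y_0^2)]^2+4\sum_{m\ge0}[\EE(Y_0Y_{mh})]^2$ and the analogous off-diagonal quantities in \eqref{e.4.9}--\eqref{e.4.11} follows once we know $\sum_{m\ge0}[\EE(Y_0Y_{mh})]^2<\infty$; this is where the restriction $H<3/4$ enters, since $\EE(Y_0Y_{mh})\sim c\,(mh)^{2H-2}$ as $m\to\infty$ (a standard asymptotic, e.g. from \cite{cheridito2003fractional}), and $(2H-2)\cdot 2<-1$ exactly when $H<3/4$. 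The doubled-step sums $\sum_m[\EE(Y_0Y_{2mh})]^2$ converge under the same condition. Matching constants then yields precisely the stated $\Sigma(i,j)$; in particular the three diagonal entries coincide because each $\eta$ is, up to the exponentially small correction, a sum of squares of a stationary sequence with the same one-dimensional law, and the equality $\Sigma(1,2)=\Sigma(2,3)$ comes from the shift-invariance $\EE(Y_0Y_{(m+1)h})$ being common to both.

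For step (b), I would compute $DG_n$ explicitly: the Malliavin derivative of $X_{t}=\sigma\int_0^t e^{-\theta(t-s)}\,dB^H_s$ is the deterministic element $\sigma e^{-\theta(t-\cdot)}I_{[0,t]}(\cdot)$ of $\mathcal H$, so $D(X_aX_b)=X_b\,DX_a+X_a\,DX_b$ and hence $\|DG_n\|_{\mathcal H}^2$ is again a quadratic functional of $Y$ (a second-chaos plus constant). One then shows $\|DG_n\|_{\mathcal H}^2-\EE\|DG_n\|_{\mathcal H}^2\to0$ in $L^2$, i.e. that its variance tends to $0$; this variance is again expanded by Proposition \ref{p.feynman_diagram} into sums of products of covariances, and the same $H<3/4$ summability kills it at rate $O(1/n)$. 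Standard manipulations then close the argument; alternatively one can invoke the equivalent ``fourth-moment'' formulation and show $\EE(G_n^4)\to3(\lim\EE(G_n^2))^2$, which reduces to the same covariance-summability estimates. Finally, because $(\alpha,\beta,\gamma)$ was arbitrary and the limiting variance is the quadratic form $(\alpha,\beta,\gamma)\Sigma(\alpha,\beta,\gamma)^T$, the Cramér–Wold theorem gives $F_n\to N(0,\Sigma)$.

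The main obstacle is the bookkeeping in step (a): carefully controlling the passage from $X$ to the stationary $Y$ (the exponentially decaying correction terms must be shown to contribute $o(1)$ uniformly, including in the mixed products), and correctly organizing the triple pattern of lags — ordinary step $h$ in the first two coordinates versus step $2h$ in the third, plus the mixed $\eta_n$–$\eta_{2h,n}$ and $\eta_{h,n}$–$\eta_{2h,n}$ covariances — so that the Cesàro limits land exactly on \eqref{e.4.9}--\eqref{e.4.11}. The analytic input (the $(mh)^{2H-2}$ decay of $\EE(Y_0Y_{mh})$ and the threshold $H<3/4$) is classical and not the difficulty; the care is entirely in the combinatorial reduction and the uniform error control.
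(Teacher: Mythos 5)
Your proposal is correct and follows essentially the same route as the paper: Cramér--Wold reduction to the linear combination $G_n$, the Wick/Feynman-diagram expansion of $\EE(G_n^2)$ with the passage from $X$ to the stationary $Y$ and the $H<3/4$ summability of $[\EE(Y_0Y_{mh})]^2$ (the content of Lemmas \ref{l.a.1}--\ref{l.a.2}), and verification that $\|DG_n\|_{\mathcal H}^2$ converges in $L^2$ to a constant (Lemma \ref{l.a.4}) so that Proposition \ref{2.2} applies. No substantive differences to report.
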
 
\begin{remark} 
\begin{enumerate}
\item[(1)] It is easy   from  the following proof to see that all entries $\Sigma(i,j)$ of the covariance matrix $\Sigma$ 
are   finite. 
\item[(2)] In an earlier work of Hu and Song  it is said 
\cite[Equation (19.19)]{hu2013parameter}   that the variance $\Sigma$ 
(corresponding to our $\Sigma(1,1)$ in our notation) is independent of the time lag
$h$. But there was an error on the bound of $A_n$ on \cite[page 434,
line 14]{hu2013parameter}. So, $A_n$ there 
  does not go to zero. Its limit is re-calculated  in this work. 
  \item[(3)] It is clear that we can use
  $\tilde \eta_{2h, n} =\frac{1}{n}\sum_{k=1}^n X_{kh}X_{ kh+ 2h} $ to replace 
  $\eta_{2h,n}$. 
  \item[(4)] We may use \eqref{e.3.a6}-\eqref{e.3.a8} to compute $\Sigma(i,j)\,, 1\le i , j\le 3$ explicitly.  
\end{enumerate} 
\end{remark}
\begin{proof}
We write 
\[
\EE (G_n^2) = (\al, \be, \ga)  \Sigma_n(\al, \be, \ga)^T\,,
\quad 
\Sigma_n=\left(\Sigma_n(i,j)\right)_{1\le i, j\le 3}\,,
\]
where $\Sigma_n$ is  a symmetric $3\times 3$ matrix given by 
\begin{empheq}[left=\empheqlbrace]{align}   
\Sigma_n(1,1)
&=  n\EE\left[(\eta_n-\EE(\eta_n ))^2\right]\,;  \nonumber \\
\Sigma_n(1,2)
&=  \Sigma_n(2,1)=n\EE\left[ (\eta_n-\EE(\eta_n ))(\eta_{h,n}-\EE(\eta_{h,n} ))\right]\,;\nonumber \\
\Sigma_n(1,3)
&=  \Sigma_n(3,1)=n\EE\left[ (\eta_n-\EE(\eta_n ))
(\eta_{2h,n}-\EE(\eta_{2h,n} ))\right]\,;\nonumber \\
\Sigma_n(2,2)
&=  n\EE\left[ (\eta_{h,n}-\EE(\eta_{h,n} )) ^2\right]\,;\nonumber  \\
\Sigma_n(2,3)
&=  \Sigma_n(3,2)= n\EE\left[ (\eta_{h,n}-\EE(\eta_{h,n} )) 
(\eta_{2h,n}-\EE(\eta_{2h,n} )) \right]\,;\nonumber \\ 
\Sigma_n(3,3)
&=   n\EE\left[  
(\eta_{2h,n}-\EE(\eta_{2h,n} ))^2 \right]  \,. \nonumber 
\end{empheq}
It is easy to observe that 
\begin{enumerate}
\item the limits of $\Sigma_n(1,1)$,  
$\Sigma_n(2,2)$, and $\Sigma_n(3,3)$  are the same;
\item the limits of $\Sigma_n(1,2)$,
 and $\Sigma_n(2,3)$  are the same;  
\item the limit  of $\Sigma_n(1,3)$ can be obtained from the limit of 
$\Sigma_n(1,2)$ by replacing $h$ by $2h$;  
\item the matrix is symmetric. 
\end{enumerate}
Thus, we only need to compute the limits  of $\Sigma_n(1,1)$  and $\Sigma_n(1,2)$.  

First, we compute the limit of $\Sigma_n(1,1)$.
From the definition \eqref{e.eta} of $\eta_n$ and Proposition \ref{p.feynman_diagram}, we have
\begin{eqnarray}
\Sigma_n(1,1)
&=&\frac{1}{n} \sum_{k,k'=1}^n \EE \left[(X_{kh}^2-\EE
\left[(X_{kh})^2\right] )(X_{k'h}^2-\EE
\left[(X_{k'h})^2\right] )\right] \nonumber  \\
&=&\frac{2}{n} \sum_{k,k'=1}^n \left[\EE (X_{kh}  X_{k'h} )\right]^2
\end{eqnarray}
By Lemma \ref{l.a.1}, we see that 
\[
\Sigma_n(1,1)\rightarrow \Sigma (1,1)=2\left[\EE(Y_0^2)\right]^2+4\sum_{m=0}^\infty  \left[\EE(Y_0Y_{mh} )\right]^2\,.
\]
This proves \eqref{e.4.9}

Now let consider the limit of $\Sigma_n(1,2)$. From the 
definitions \eqref{e.eta}   and   from   Proposition \ref{p.feynman_diagram}  it follows
\begin{eqnarray}
\Sigma_n(1,2)
&=&\frac{1}{n} \sum_{k,k'=1}^n \EE \left[(X_{kh}^2-\EE
\left[\left(X_{kh})^2\right] \right)\left(X_{k'h} X_{(k'+1)h}-\EE
\left[X_{k'h} X_{(k'+1)h}\right] \right)\right] \nonumber  \\
&=&\frac{2}{n} \sum_{k,k'=1}^n  \EE (X_{kh}  X_{k'h} ) \EE (X_{kh}  X_{(k'+1)h} ) \,.
\end{eqnarray}
By Lemma \ref{l.a.2}, we have
\begin{eqnarray}
\Sigma_n(1,2)\rightarrow 4\sum_{m=0}^\infty   \EE(Y_0Y_{mh}) \EE(Y_0Y_{(m+1)h})\,. 
\end{eqnarray}
This proves \eqref{e.4.10}.  \eqref{e.4.11} is obtained from 
\eqref{e.4.10} by replacing $h$ by $2h$.  This proves 
\begin{equation}
\lim_{n\rightarrow \infty} 
\EE (G_n^2) = (\al, \be, \ga)  \Sigma(\al, \be, \ga)^T\,. 
\end{equation}
Using Lemma \ref{l.a.4}, we know that $J_n :=\langle DG_n,DG_n\rangle _\mathcal{H}$ converges to a constant.  Then by Proposition 
\ref{2.2}, we know $G_n$ converges in law to a normal random variable. 

Since $G_n$ converges to a normal for any $\al$, $\be$, and $\ga$, we know by the Cram\'er-Wold theorem that 
$F_n$ converges to a mean zero Gaussian random vector, 
proving the theorem. 
\end{proof}
Now using the   delta method  and the above  Theorem 
\ref{theorem4.3} we immediately have the following theorem. 
\begin{theorem}\label{theorem4.4}
Let $H \in (0,\frac{1}{2})\cup(\frac{1}{2},\frac{3}{4})$. Let $X_t$ be the Ornstein-Uhlenbeck process defined by equation \eqref{e.1.1}   and let $(\tilde \th_n , \tilde H_n , \tilde \si_n )$ be defined by \eqref{e.3.10}. Then
$$\begin{pmatrix}
 \sqrt{n}(\tilde{\theta_n} - \theta) \\
\sqrt{n}(\tilde H_n - H) \\
\sqrt{n}(\tilde \sigma_n  - \si)  \\
\end{pmatrix} { \stackrel{d} \rightarrow}  N(0, \tilde \Sigma)\\
 \,,
$$
where $J$ denotes the Jacobian matrix of $f$,  defined by 
\eqref{e.3.jacobian},  $\Sigma$ is defined in \ref{theorem4.3}  and
\begin{eqnarray}
\tilde \Sigma=\left[J(\th, H, \si)\right]^{-1}  \Sigma  \left[J^T(\th, H, \si)\right]^{-1}  \,. 
\end{eqnarray} 
\end{theorem}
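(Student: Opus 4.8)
The plan is to deduce the assertion from Theorem \ref{theorem4.3} by the multivariate delta method applied to the map $f^{-1}$. Two points must be settled first: the centering in Theorem \ref{theorem4.3} is by $\EE(\Upupsilon_n)$ and not by the true limit $f(\theta,H,\sigma)=(\EE(Y_0^2),\EE(Y_0Y_h),\EE(Y_0Y_{2h}))^T$, so one must verify that $\sqrt n\,(\EE(\Upupsilon_n)-f(\theta,H,\sigma))\to 0$; and $f^{-1}$ must be differentiable at $f(\theta,H,\sigma)$ with the correct derivative.

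For the bias, I would use the representation \eqref{e.3.3}, i.e. $X_{kh}=Y_{kh}-e^{-\theta kh}Y_0$, together with the stationarity of $Y$. Expanding the relevant products yields identities such as
\begin{equation*}
\EE(X_{kh}^2)-\EE(Y_0^2)=e^{-2\theta kh}\EE(Y_0^2)-2e^{-\theta kh}\EE(Y_0Y_{kh})\,,
\end{equation*}
and analogous expressions for $\EE(X_{kh}X_{(k+1)h})-\EE(Y_0Y_h)$ and $\EE(X_{2kh}X_{2kh+2h})-\EE(Y_0Y_{2h})$; in every case the correction is a finite sum of terms of the form $e^{-c\theta kh}$ times a bounded quantity, the bound on $\EE(Y_0Y_{mh})$ coming from Cauchy--Schwarz (or from \cite[Theorem 2.3]{cheridito2003fractional}). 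Each of the resulting series in $k$ is absolutely convergent, so $\EE(\eta_n)-f_1=O(1/n)$, $\EE(\eta_{h,n})-f_2=O(1/n)$ and $\EE(\eta_{2h,n})-f_3=O(1/n)$, whence $\sqrt n\,(\EE(\Upupsilon_n)-f(\theta,H,\sigma))\to 0$. Combining this with Theorem \ref{theorem4.3} and Slutsky's theorem gives $\sqrt n\,(\Upupsilon_n-f(\theta,H,\sigma))\stackrel{d}{\longrightarrow}N(0,\Sigma)$.

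Next, since $\det J(\theta,H,\sigma)\neq 0$ (the content of Appendix \ref{B}), the inverse function theorem shows that $f$ is a $C^1$-diffeomorphism of a neighborhood of $(\theta,H,\sigma)$ onto a neighborhood $V$ of $f(\theta,H,\sigma)$, with $D(f^{-1})\big(f(\theta,H,\sigma)\big)=[J(\theta,H,\sigma)]^{-1}$. By the strong consistency result proved above, $\Upupsilon_n$ eventually lies in $V$ almost surely and then $(\tilde\theta_n,\tilde H_n,\tilde\sigma_n)=f^{-1}(\Upupsilon_n)$ as in \eqref{e.3.10}; because $\PP(\Upupsilon_n\in V)\to 1$, the behaviour of $(\tilde\theta_n,\tilde H_n,\tilde\sigma_n)$ off this event is irrelevant to the limiting law. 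Applying the multivariate delta method to $f^{-1}$ at the point $f(\theta,H,\sigma)$ then gives
\begin{equation*}
\sqrt n\begin{pmatrix}\tilde\theta_n-\theta\\ \tilde H_n-H\\ \tilde\sigma_n-\sigma\end{pmatrix}\stackrel{d}{\longrightarrow} N\big(0,\,[J(\theta,H,\sigma)]^{-1}\,\Sigma\,[J^T(\theta,H,\sigma)]^{-1}\big)=N(0,\tilde\Sigma)\,,
\end{equation*}
which is the assertion, with $\tilde\Sigma$ as in the statement.

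The only step requiring genuine (though routine) work is the $O(1/n)$ bias estimate; everything else is a standard application of the delta method. I expect the main point of care to be the bookkeeping around the neighborhood $V$ and the almost-sure eventual invertibility of the system \eqref{e.3.10}, which is already guaranteed by the strong consistency theorem, so that the delta method — normally stated for a single fixed differentiable map — applies to the fixed function $f^{-1}$.
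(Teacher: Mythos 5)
Your proposal is correct and follows essentially the same route as the paper, which simply invokes the delta method together with Theorem \ref{theorem4.3}. The one thing you add beyond the paper's one-line argument is the explicit verification that $\sqrt{n}\,(\EE(\Upupsilon_n)-f(\theta,H,\sigma))\to 0$ (needed to recenter from $\EE(\eta_n)$ to the true limit), and that computation is accurate, so your write-up is a legitimate filled-in version of the paper's proof.
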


%
%
%
%
%

%
%
%
%
%
%
%
%
%

\bibliographystyle{abbrv}

\bibliography{bibliography}

\bigskip
\appendix
\renewcommand{\theequation}{A.\arabic{equation}} 
\def\section{\setcounter{equation}{0}\Section}
\section{Detailed computations }\label{A}

First, 
we need a lemma
from \cite[supplementary data, Lemma 5.4, Equation (5.7)]{hu2010parameter}.   
\begin{lemma}\label{lemma.a.1} Let $X_t$ be the Ornstein-Uhlenbeck process 
defined by \eqref{e.1.1}.  Then 
\begin{equation}
|\EE(X_tX_s)|\le C(1\wedge |t-s|^{2H-2})
\le   (1+ |t-s| )^{2H-2} \,. \label{e.a.9}
\end{equation}
The above inequality also holds true for $Y_t$. 
\end{lemma}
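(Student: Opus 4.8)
The plan is to reduce the bound for the nonstationary process $X_t$ to the corresponding bound for the stationary process $Y_t$, and then to read the latter off the spectral representation of the stationary covariance. For the reduction I would use the identity $X_t = Y_t - e^{-\theta t}Y_0$ coming from \eqref{e.3.3}. Expanding the product and invoking stationarity of $Y$ gives
\[
\EE(X_sX_t) = \rho(|t-s|) - e^{-\theta t}\rho(s) - e^{-\theta s}\rho(t) + e^{-\theta(s+t)}\rho(0),
\]
where $\rho(\tau):=\EE(Y_0Y_\tau)$. Assuming without loss of generality $0\le s\le t$, the three correction terms are all controlled once the stationary bound $|\rho(\tau)|\le C(1\wedge\tau^{2H-2})$ is known: the exponential factors beat any polynomial for arguments $\ge 1$, for arguments $<1$ the quantity $1\wedge\tau^{2H-2}$ equals $1$, and $t\ge t-s$ together with $2H-2<0$ gives $\rho(t)\le C(t-s)^{2H-2}$. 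Summing yields $|\EE(X_sX_t)|\le C(1\wedge|t-s|^{2H-2})$, and the final comparison with $(1+|t-s|)^{2H-2}$ is an elementary two-regime estimate (near $0$ both sides are comparable to a constant, and for large argument both behave like $|t-s|^{2H-2}$). So everything reduces to the stationary estimate.

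For the stationary estimate I would use the spectral representation $\rho(\tau)=c_H\sigma^2\int_{-\infty}^\infty e^{ix\tau}\frac{|x|^{1-2H}}{\theta^2+x^2}\,dx$ with $c_H=\frac{\Gamma(2H+1)\sin(\pi H)}{2\pi}$, whose instances $\tau\in\{0,h,2h\}$ are exactly \eqref{e.3.a6}--\eqref{e.3.a8}. Boundedness, i.e. the factor $1$ in $1\wedge\tau^{2H-2}$, is immediate since the spectral density is integrable, whence $|\rho(\tau)|\le\rho(0)<\infty$. The real content is the decay $|\rho(\tau)|\le C\tau^{2H-2}$ for $\tau\ge 1$, which I would obtain by isolating the origin singularity. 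Writing the density as $\chi(x)\frac{|x|^{1-2H}}{\theta^2+x^2}+(1-\chi(x))\frac{|x|^{1-2H}}{\theta^2+x^2}$ with a smooth cutoff $\chi$ equal to $1$ near $0$ and supported in $[-1,1]$, the second piece is smooth with rapidly decaying derivatives, so repeated integration by parts makes its Fourier transform $O(\tau^{-N})$ for every $N$. For the first piece I would replace $\frac{1}{\theta^2+x^2}$ by its value $\theta^{-2}$ at the origin plus a remainder carrying an extra factor $x^2$ (hence a milder singularity $|x|^{3-2H}$ with faster decay $\tau^{2H-4}$), reducing the leading behavior to $\theta^{-2}\int e^{ix\tau}\chi(x)|x|^{1-2H}\,dx$; since $|x|^{1-2H}$ is homogeneous of degree $1-2H\in(-1,1)\setminus\{0\}$, its Fourier transform is a constant multiple of $|\tau|^{2H-2}$, perturbed by the cutoff only through an $O(\tau^{-N})$ term. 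This gives $\rho(\tau)=c\,\tau^{2H-2}(1+o(1))$ and in particular the bound.

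I expect the main obstacle to be precisely this extraction of the fractional rate $\tau^{2H-2}$. A single integration by parts gives only $\tau^{-1}$, which is too weak when $H<1/2$ (there $2H-2<-1$), and two integrations by parts are unavailable because the second derivative of the spectral density is not integrable at the origin; the genuine rate is dictated by the non-smooth singularity $|x|^{1-2H}$ and must therefore be read off from the Fourier transform of the homogeneous distribution $|x|^{1-2H}$. Care is needed to separate the regimes $H<1/2$ (density continuous but only Hölder at $0$) and $H>1/2$ (density blows up but stays integrable), though the cutoff-plus-homogeneity argument handles both uniformly.

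As an independent cross-check for $H\in(1/2,1)$, I would note that one can bypass the spectral integral entirely by using the kernel representation
\[
\rho(\tau) = \sigma^2 H(2H-1)\int_0^\infty\!\!\int_0^\infty e^{-\theta(a+b)}\,|b-a-\tau|^{2H-2}\,da\,db
\]
and splitting the domain at $b=\tau/2$: on $\{b\le\tau/2\}$ one has $|b-a-\tau|\ge\tau/2$, which yields the $\tau^{2H-2}$ bound directly after integrating the exponential weights, while the complementary region $\{b>\tau/2\}$ carries an exponentially small weight (the integrable singularity $b-a=\tau$ only occurs for $b\ge\tau$, where $e^{-\theta(a+b)}\le e^{-\theta\tau}$). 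This confirms the rate but does not cover $H<1/2$, where the kernel is too singular to be a genuine density and the spectral argument remains essential.
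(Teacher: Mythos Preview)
Your proposal is correct, but there is nothing in the paper to compare it to: the paper does not prove this lemma at all, citing it instead from \cite[supplementary data, Lemma~5.4, Eq.~(5.7)]{hu2010parameter}.

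Your reduction from $X$ to $Y$ via $X_t=Y_t-e^{-\theta t}Y_0$ and the handling of the three exponentially damped correction terms are sound. For the stationary bound $|\rho(\tau)|\le C(1\wedge\tau^{2H-2})$, your spectral extraction of the rate $\tau^{2H-2}$ is valid, and you correctly flag the delicate point that a single integration by parts is insufficient when $H<1/2$ and that the rate must come from the homogeneity of the origin singularity $|x|^{1-2H}$. That said, a much shorter route is already available inside the paper: the asymptotic expansion of \cite[Theorem~2.3]{cheridito2003fractional}, quoted verbatim in the proof of Lemma~\ref{l.a.1}, gives $\EE(Y_0Y_\tau)=\tfrac{1}{2}\sigma^2(2H)(2H-1)\tau^{2H-2}+O(\tau^{2H-4})$ directly for all $H\in(0,1)\setminus\{1/2\}$, while boundedness $|\rho(\tau)|\le\rho(0)$ is trivial. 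Your Fourier-analytic derivation buys a self-contained explanation of \emph{why} the exponent is $2H-2$, at the cost of invoking the distributional Fourier transform of $|x|^{1-2H}$; the paper's implicit route via Cheridito et al.\ is a one-line citation but treats the asymptotics as a black box. The kernel cross-check you give for $H>1/2$ is a pleasant sanity check, though as you note it cannot cover $H<1/2$.
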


\begin{lemma}\label{l.a.1}  Let $X_t$ be defined by \eqref{e.1.1}.  When $H \in (0,\frac{1}{2})\cup (\frac{1}{2},\frac{3}{4})$ we have
\begin{equation}
\lim_{n\rightarrow \infty} \frac{1}{n} \sum_{k,k'=1}^n \left[\EE (X_{kh}X_{k'h})\right]^2 =\left[\EE(Y_0^2)\right]^2 
+2\sum_{m=1}^\infty \left[ \EE (Y_{0}Y_{mh })\right] ^2\,. 
\end{equation}
 \end{lemma}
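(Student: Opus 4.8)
The plan is to reduce the double sum over the non-stationary covariances to an average of the stationary ones. First I would use \eqref{e.3.3}, $X_t=Y_t-e^{-\theta t}Y_0$, to write, with $\rho(m):=\EE(Y_0Y_{mh})$ and by the stationarity of $Y$,
\[
\EE(X_{kh}X_{k'h})=\rho(|k-k'|)+R_{k,k'},\qquad
R_{k,k'}=-e^{-\theta k'h}\EE(Y_{kh}Y_0)-e^{-\theta kh}\EE(Y_0Y_{k'h})+e^{-\theta(k+k')h}\EE(Y_0^2).
\]
By Lemma \ref{lemma.a.1} (which also applies to $Y$), $|\EE(Y_sY_t)|\le1$ and $|\rho(m)|\le(1+mh)^{2H-2}$, so $|R_{k,k'}|\le C\bigl(e^{-\theta kh}+e^{-\theta k'h}\bigr)$. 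Expanding the square, I would split
\[
\frac1n\sum_{k,k'=1}^n\bigl[\EE(X_{kh}X_{k'h})\bigr]^2=A_n+2B_n+C_n,
\]
where $A_n=\frac1n\sum_{k,k'}\rho(|k-k'|)^2$, $B_n=\frac1n\sum_{k,k'}\rho(|k-k'|)R_{k,k'}$, and $C_n=\frac1n\sum_{k,k'}R_{k,k'}^2$, and treat the three pieces separately.

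For the main term, grouping the pairs $(k,k')$ by $m=|k-k'|$ gives
\[
A_n=\rho(0)^2+2\sum_{m=1}^{n-1}\Bigl(1-\frac mn\Bigr)\rho(m)^2.
\]
This is where the hypothesis $H<\tfrac34$ enters: it gives $4H-4<-1$, so $\sum_{m\ge1}\rho(m)^2\le\sum_{m\ge1}(1+mh)^{4H-4}<\infty$, and dominated convergence for series then yields
\[
A_n\longrightarrow\rho(0)^2+2\sum_{m=1}^\infty\rho(m)^2=\bigl[\EE(Y_0^2)\bigr]^2+2\sum_{m=1}^\infty\bigl[\EE(Y_0Y_{mh})\bigr]^2,
\]
which is the asserted limit. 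It then remains to show $B_n\to0$ and $C_n\to0$.

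For $C_n$ I would expand $R_{k,k'}^2$ into its finitely many terms; each is dominated either by $e^{-\theta(k+k')h}$ or by $e^{-2\theta k'h}[\EE(Y_{kh}Y_0)]^2\le e^{-2\theta k'h}(1+kh)^{4H-4}$ (and the symmetric expression), and summing over $k,k'$ — using that both $\sum_ke^{-\theta kh}$ and $\sum_k(1+kh)^{4H-4}$ converge (the latter again because $H<\tfrac34$) — one gets $\sum_{k,k'=1}^nR_{k,k'}^2\le C$, hence $C_n\le C/n\to0$. For $B_n$ I would use $|\rho(|k-k'|)|\le(1+|k-k'|h)^{2H-2}$ together with the bound on $R_{k,k'}$ to get
\[
|B_n|\le\frac Cn\sum_{k'=1}^ne^{-\theta k'h}\sum_{k=1}^n(1+|k-k'|h)^{2H-2}\le\frac Cn\bigl(1+n^{2H-1}\bigr),
\]
using that $\sum_{k=1}^n(1+|k-k'|h)^{2H-2}\le C(1+n^{2H-1})$ uniformly in $k'$; since $2H-2<0$ the right-hand side tends to $0$. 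Combining the three limits proves the lemma.

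The hard part will be the estimation of $B_n$ (and of the off-diagonal pieces of $C_n$): it genuinely needs the polynomial decay $|\EE(Y_0Y_{mh})|\le C(1+mh)^{2H-2}$ from Lemma \ref{lemma.a.1}, not just boundedness, together with the elementary but slightly fussy fact that $\sum_{k=1}^n(1+|k-k'|h)^{2H-2}=o(n)$ uniformly in $k'$. It is also worth noting that the convergence of $\sum_m[\EE(Y_0Y_{mh})]^2$ — hence the whole statement — is exactly what forces the restriction $H<\tfrac34$; the exclusion of $H=\tfrac12$ plays no essential role here and is only made to keep the fractional-kernel bounds uniform.
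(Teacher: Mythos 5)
Your proof is correct and follows essentially the same route as the paper: decompose $\EE(X_{kh}X_{k'h})$ via the stationary process $Y$ into the main covariance term plus exponentially damped remainders, compute the Ces\`aro limit of the main term using the summability of $[\EE(Y_0Y_{mh})]^2$ for $H<\tfrac34$, and kill the remainder and cross terms using the $O(m^{2H-2})$ decay. The only (cosmetic) difference is that you bound the cross term $B_n$ directly with a convolution estimate where the paper uses the Cauchy--Schwarz inequality together with boundedness of the main term.
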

\begin{proof} To simplify notations we shall use $X_k$, $Y_k$ to represent 
$X_{kh}$, $Y_{kh}$ etc. 
From the relation \eqref{e.3.3} it is easy to see that 
%
%
\begin{eqnarray}
\EE (X_{k}X_{k'}) 
&=& \EE (Y_{k}Y_{k'})   
 - e^{-\theta k'h} \EE (Y_0 Y_k )  -e^{-\theta kh} \EE (Y_0 Y_{k'} )+  e^{-\theta (k+k')h} \EE ( Y_0^2  )\nonumber\\ 
 &=:&\sum_{i=1}^4 I_{i, k,k'} \,,\label{e.5.1} 
\end{eqnarray} 
where $ I_{i, k, k'}$, $i=1, \cdots, 4$,  denote the above $i$-th term.

Let us first consider $\frac{1}{n} \sum_{k,k'=1}^n I_{i, k,k'}^2 $  
for $i=2, 3, 4$.  First, we consider $i=2$. 
 By \cite[Theorem 2.3]{cheridito2003fractional},
we  know that $\EE (Y_0Y_{k})$ converges to $0$ when $k\rightarrow \infty$.
Thus by the Toeplitz theorem, we have 
\begin{eqnarray}
\frac1n\sum_{k,k'=1}^n I_{2, k, k'}^2
=  \frac1n \sum_{k,k'=1}^n e^{-2\theta k' h}\left[ \EE (Y_0 Y_k )\right]^2
\le C\frac1n \sum_{k }^\infty \left[ \EE (Y_0 Y_k)\right]^2    \rightarrow 0\,. \label{e.5.2} 
\end{eqnarray}
Exactly in the same way   we have 
\begin{eqnarray}
\frac1n\sum_{k,k'=1}^n I_{3, k, k'}^2    \rightarrow 0\,. \label{e.5.3} 
\end{eqnarray}
When $i=4$,   we have easily 
\begin{eqnarray}
\lim_{n\rightarrow \infty} 
\frac1n\sum_{k,k'=1}^n I_{4, k, k'}^2 = 
\frac1n\sum_{k,k'=1}^n e^{-2\th (k+k')} 
\left[\EE(Y_0^2)\right]^2 \rightarrow 0\,.   
\label{e.5.4} 
\end{eqnarray} 
Now we   have 
\begin{eqnarray*}
\frac{1}{n} \sum_{k,k'=1}^n \left[\EE (X_{k }X_{k' })\right]^2
&=&\frac{1}{n} \sum_{i,j=1}^4\sum_{k,k'=1}^n I_{i,k,k'}I_{j, k, k'}\\
&=&  \frac{1}{n} \sum_{k,k'=1}^n I_{1,k,k'}^2+\frac{1}{n}  \sum_{i\not=1,{\rm or} j\not=1}\sum_{k,k'=1}^n I_{i,k,k'}I_{j, k, k'}\,. 
\end{eqnarray*} 
When one of the $i$ or $j$ is not equal to $1$, we have by the H\"older inequality 
\begin{eqnarray*}
\frac{1}{n}  \sum_{k,k'=1}^n \left|I_{i,k,k'}I_{j, k, k'}\right|
&\le& \left(\frac{1}{n}  \sum_{k,k'=1}^n I_{i,k,k'}^2\right)^{1/2}
\left(\frac{1}{n}  \sum_{k,k'=1}^n  I_{j, k, k'}^2\right)^{1/2}
\end{eqnarray*} 
which will go to $0$ if we can show $\frac{1}{n}  \sum_{k,k'=1}^n I_{1,k,k'}^2, n=1, 2, \cdots$ is bounded.    In fact, we have   
\begin{eqnarray}
&&\frac{1}{n}  \sum_{k,k'=1}^n I_{1,k,k'}=
\frac1n\sum_{k,k'=1}^n \left[ \EE (Y_k Y_{k'})\right]^2 \nonumber \\
&&\qquad\quad= 
\frac1n\sum_{k,k'=1}^n \left[ \EE (Y_0Y_{|k'-k|})\right]^2\nonumber \\
&&\qquad \quad =\EE(Y_0^2)+
\frac2n\sum_{m=1}^{n-1} (n-m) \left[ \EE (Y_0Y_m)\right]^2 \nonumber \\
&&\qquad\quad=  \left[\EE(Y_0^2)\right]^2+ 2\sum_{m=1}^{n-1}   \left[ \EE (Y_0Y_m)\right]^2-\frac2n\sum_{m=1}^{n-1} m  \left[ \EE (Y_0Y_m)\right]^2\,. 
\end{eqnarray}  
By Lemma \ref{lemma.a.1} for $Y_t$ or an  expression of $\EE (Y_0Y_m) $ given in \cite[Theorem 2.3]{cheridito2003fractional}:
\[
\EE (Y_0Y_m) = \frac{1}{2} \sigma^2 \sum_{n=1}^{N} (\Pi_{k=0}^{2n-1} (2H-k)) m^{2H-2n} + O(m^{2H-2N-2})\,.
\]
This means  $\EE (Y_0Y_m) = O(m^{2H-2})$ as $m\rightarrow \infty$, which in turn means that $\left[\EE (Y_0Y_m)\right] ^2 = O(m^{4H-4})$.
 Hence, for $H<\frac{3}{4}$, $\sum_{m=0}^{n-1} \EE (Y_{0}Y_{m })^2$ converges as $n$ tends to infinity.  

Notice that for $H <\frac{3}{4}$, $m \EE (Y_0Y_m)^2 =O(m^{4H-3})\rightarrow  0$  as $m\rightarrow \infty$. By Toeplitz theorem we have
\[
\frac{1}{n} \sum_{m=0}^{n-1} m  \left[ \EE (Y_0Y_m)\right]^2 \rightarrow 0\quad \hbox{as $n\rightarrow \infty$}\,.  
\]
Thus, $\frac{1}{n}\sum_{k,k'>k}^{n}\left[\EE (Y_{k}Y_{k'})\right]^2$ converges
to $\left[\EE(Y_0^2)\right]^2+2\sum_{m=1}^\infty   \left[ \EE (Y_0Y_m)\right]^2$ as $n$ tends to infinity. 
\end{proof} 
 
\begin{lemma}\label{l.a.2}  Let $X_t$ be defined by \eqref{e.1.1}.  When $H \in (0,\frac{1}{2})\cup (\frac{1}{2},\frac{3}{4})$ we have
\begin{equation}
\lim_{n\rightarrow \infty} \frac{1}{n} \sum_{k,k'=1}^n  \EE (X_{kh}X_{k'h}) 
\EE (X_{kh}X_{(k'+1)h})  =2  \sum_{m=0}^{\infty }  \EE (Y_0Y_{mh}) \EE (Y_0Y_{(m+1)h} ) \,. 
\end{equation}
 \end{lemma}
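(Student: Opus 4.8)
The plan is to follow the template of Lemma \ref{l.a.1} almost verbatim, replacing the single squared factor by the product of two covariances at a unit time-lag apart. Write $X_k$ for $X_{kh}$, $Y_k$ for $Y_{kh}$, and use the decomposition \eqref{e.5.1}, namely $\EE(X_kX_{k'})=\sum_{i=1}^4 I_{i,k,k'}$, together with the analogous decomposition $\EE(X_kX_{k'+1})=\sum_{j=1}^4 \tilde I_{j,k,k'}$, where $\tilde I_{j,k,k'}$ is obtained from $I_{j,k,k'}$ by shifting $k'$ to $k'+1$ in every occurrence. Then expand the product
\[
\frac1n\sum_{k,k'=1}^n \EE(X_kX_{k'})\EE(X_kX_{k'+1}) = \frac1n\sum_{i,j=1}^4\sum_{k,k'=1}^n I_{i,k,k'}\tilde I_{j,k,k'}
\]
into sixteen terms. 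The main term is $i=j=1$; all fifteen others involve at least one exponentially decaying factor $e^{-\theta k h}$ or $e^{-\theta k' h}$.

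For the fifteen remainder terms I would argue exactly as in Lemma \ref{l.a.1}: by Cauchy--Schwarz each mixed sum is bounded by the geometric mean of $\frac1n\sum I_{i,k,k'}^2$ and $\frac1n\sum \tilde I_{j,k,k'}^2$, and by \eqref{e.5.2}, \eqref{e.5.3}, \eqref{e.5.4} (and their shifted analogues, which hold by the same computation since shifting $k'$ by one does not affect the exponential-decay bounds) the sums with $i\neq 1$ or $j\neq 1$ tend to zero, while $\frac1n\sum I_{1,k,k'}^2$ and $\frac1n\sum \tilde I_{1,k,k'}^2$ are bounded — the latter by the computation already carried out in Lemma \ref{l.a.1} using $\EE(Y_0Y_m)=O(m^{2H-2})$ and $H<3/4$. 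Hence only the $i=j=1$ term survives in the limit.

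For the surviving term, stationarity of $Y$ gives $I_{1,k,k'}\tilde I_{1,k,k'}=\EE(Y_0Y_{k'-k})\,\EE(Y_0Y_{k'-k+1})$, so
\[
\frac1n\sum_{k,k'=1}^n \EE(Y_0Y_{k'-k})\EE(Y_0Y_{k'-k+1}) = \sum_{m=-(n-1)}^{n-1}\frac{n-|m|}{n}\,\EE(Y_0Y_m)\EE(Y_0Y_{m+1}).
\]
Because $|\EE(Y_0Y_m)\EE(Y_0Y_{m+1})| = O(|m|^{4H-4})$ is summable for $H<3/4$, dominated convergence (Toeplitz) lets the weight $\frac{n-|m|}{n}\to 1$, yielding $\sum_{m\in\ZZ}\EE(Y_0Y_m)\EE(Y_0Y_{m+1})$. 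Finally I would fold the negative indices onto the positive ones: using $\EE(Y_0Y_{-m})=\EE(Y_0Y_m)$ and reindexing, the sum over $m\in\ZZ$ equals $2\sum_{m=0}^\infty \EE(Y_0Y_{mh})\EE(Y_0Y_{(m+1)h})$, which is the claimed limit. (One should double-check the bookkeeping of the shift here: the $m$ and $m-1$ versus $m$ and $m+1$ pairing across the sign flip is where an off-by-one slip would occur, so that reindexing step deserves care.)

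The only genuine obstacle is this last combinatorial point — verifying that the bilateral sum $\sum_{m\in\ZZ}\EE(Y_0Y_m)\EE(Y_0Y_{m+1})$ really collapses to twice the one-sided sum and not to something with an extra boundary term at $m=0$ or $m=-1$; everything else is a routine repetition of the estimates in Lemma \ref{l.a.1}. Note also that the off-diagonal structure (covariances at lag one rather than lag zero) is exactly why $\Sigma(1,2)$ in Theorem \ref{theorem4.3} is $4\sum_m\EE(Y_0Y_{mh})\EE(Y_0Y_{(m+1)h})$ rather than the sum of squares appearing in $\Sigma(1,1)$.
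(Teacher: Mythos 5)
Your proposal is correct and follows essentially the same route as the paper: decompose both covariances via \eqref{e.5.1}, kill the fifteen cross terms by Cauchy--Schwarz and the exponential decay exactly as in Lemma \ref{l.a.1}, and reduce the main term to a weighted sum of $\EE(Y_0Y_{mh})\EE(Y_0Y_{(m+1)h})$ over lags. Your bilateral-sum bookkeeping is right — the paper instead splits directly into the $k'\ge k$ and $k'<k$ ranges, obtaining $\frac1n\sum_{m}(n-m)\,\EE(Y_0Y_m)\EE(Y_0Y_{m+1})$ plus $\frac1n\sum_{m}(n-m)\,\EE(Y_0Y_m)\EE(Y_0Y_{m-1})$, each of which converges to the one-sided sum, which is the same computation in different notation.
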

\begin{proof}
We continue to use  the notations  in Lemma \ref{l.a.1}. 
\begin{eqnarray}
\EE (X_{k}X_{k'}) 
 &= &\sum_{i=1}^4 I_{i, k,k'}\,,  \nonumber\\
 \EE (X_{k}X_{k'+1})  
 &= &\sum_{i=1}^4 I_{i, k,k'+1} \,,\label{e.5.8} 
\end{eqnarray} 
where $ I_{i, k, k'}$, $i=1, \cdots, 4$,  is defined in \eqref{e.5.2}.
As in the proof of Lemma \ref{l.a.1}, we have 
\begin{eqnarray}
&&\lim_{n\rightarrow \infty} 
\sum_{k,k'=1}^n  \EE (X_{k }X_{k' }) 
\EE (X_{k }X_{ k'+1 })=\lim_{n\rightarrow \infty}  
\frac1n\sum_{k,k'=1}^n   \EE (Y_k Y_{k' })\EE (Y_k Y_{k'+1})  \nonumber \\
&&\qquad\quad= 
\frac1n\sum_{k,k'=1}^n   \EE (Y_0Y_{|k'-k|}) \EE (Y_0Y_{|k'+1-k| }) \nonumber \\
&&\qquad\quad =
\frac1n\sum_{m=0}^{n-1} (n-m)   \EE (Y_0Y_m) \EE (Y_0Y_{m+1} )
+\frac1n\sum_{m=1}^{n-1} (n-m)   \EE (Y_0Y_m) \EE (Y_0Y_{m-1} ) \nonumber
\end{eqnarray}   
Now we can use the same argument as in proof of Lemma \ref{l.a.1}
to obtain 
\[
\lim_{n\rightarrow \infty} 
\sum_{k,k'=1}^n  \EE (X_{k }X_{k' }) 
\EE (X_{k }X_{ k'+1 })=2  \sum_{m=0}^{\infty }  \EE (Y_0Y_m) \EE (Y_0Y_{m+1} ) \,, 
\]
proving the lemma.  
\end{proof}

Let $G_n$ be   defined by \eqref{e.4.4}   in Section \ref{sec:4}.   Its Malliavin derivative is given by 
\begin{eqnarray}
 DG_n &=& \frac{1}{\sqrt{n}} 2\alpha \sum_{k=1}^n  X_{k}DX_{k} + \frac{1}{\sqrt{n}} \beta \sum_{k=1}^n ( X_{k}  DX_{k+1} 
 +  X_{k+1}DX_{k} )\nonumber\\
 &&\qquad + \frac{1}{\sqrt{n}} \sum_{k=1}^n \gamma ( X_{k } DX_{ k +2 }+  X_{ k +2 }DX_{k } )\,. \label{e.a.10}
\end{eqnarray} 
\begin{lemma}\label{l.a.4} 
Define  the sequence of random variables $J_n :=\langle DG_n,DG_n\rangle _\mathcal{H}$.  Then 
\begin{equation}
\lim_{n\rightarrow\infty} \EE\left[J_n-\EE(J_n)\right]^2= 0\,. 
 \end{equation}  
\end{lemma}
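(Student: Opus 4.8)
\smallskip
\noindent\textbf{Proof proposal.} Since the statement to be proved is precisely that $\var(J_n)\to 0$, the plan is to realize $J_n$ as a quadratic functional of the Gaussian family $\{X_{kh}\}_{k\ge 1}$ and then bound its variance by a Wick (Feynman diagram) expansion combined with the covariance decay of Lemma~\ref{lemma.a.1}.

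First I would record an elementary reduction. Each $X_t$ lies in the first Wiener chaos, $X_t=B^H(\varphi_t)$ with $\varphi_t(s)=\sigma e^{-\theta(t-s)}I_{[0,t]}(s)\in\mathcal H$, so its Malliavin derivative $DX_t=\varphi_t$ is a \emph{deterministic} element of $\mathcal H$ with $\langle DX_s,DX_t\rangle_{\mathcal H}=\EE(X_sX_t)$. Writing \eqref{e.a.10} schematically as $DG_n=\frac{1}{\sqrt n}\sum_{k=1}^n\sum_{l}c_l\,X_{k+p_l}\,DX_{k+q_l}$, where $l$ indexes the five terms of \eqref{e.a.10}, with shifts $(p_l,q_l)\in\{(0,0),(0,1),(1,0),(0,2),(2,0)\}$ and coefficients $c_l\in\{2\al,\be,\ga\}$, one gets
\[
J_n=\langle DG_n,DG_n\rangle_{\mathcal H}=\frac1n\sum_{k,k'=1}^n\sum_{l,l'}c_lc_{l'}\,X_{k+p_l}X_{k'+p_{l'}}\,\EE\!\big(X_{k+q_l}X_{k'+q_{l'}}\big).
\]
Thus $J_n$ is a finite linear combination, with deterministic coefficients, of products $X_aX_b$; in particular it belongs to the sum of the $0$-th and the $2$-nd Wiener chaos.

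Next I would expand $\var(J_n)=\EE[(J_n-\EE(J_n))^2]$ using Proposition~\ref{p.feynman_diagram}: for mean-zero jointly Gaussian $X_a,X_b,X_c,X_d$,
\[
\EE\big[(X_aX_b-\EE(X_aX_b))(X_cX_d-\EE(X_cX_d))\big]=\EE(X_aX_c)\EE(X_bX_d)+\EE(X_aX_d)\EE(X_bX_c).
\]
Applying this to $\EE[(J_n-\EE(J_n))^2]$ yields a sum over four indices $k_1,k_1',k_2,k_2'$ (each in $\{1,\dots,n\}$, with prefactor $n^{-2}$) in which, up to the finitely many fixed shifts $(p_l,q_l)$ of size $\le 2$ and bounded combinatorial constants, every summand is a product of \emph{four} covariances $\EE(X_\cdot X_\cdot)$ — two coming from the deterministic coefficients of $J_n$ and two from the Gaussian pairing — whose arguments form a $4$-cycle on $k_1,k_1',k_2,k_2'$. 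Invoking Lemma~\ref{lemma.a.1} in the form $|\EE(X_sX_t)|\le C(1+|s-t|)^{2H-2}=:Cw(s-t)$ (with $w$ even), it therefore suffices to show
\[
\frac1{n^2}\sum_{a,b,c,d=1}^n w(a-b)\,w(b-c)\,w(c-d)\,w(d-a)\ \longrightarrow\ 0 .
\]

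This cyclic estimate is the only real work, and it is where the hypothesis $H<\tfrac34$ is genuinely used. I would sum out $a$ (a factor $n$) and substitute $u=b-a,\ v=c-b,\ s=d-c$ (so $d-a=u+v+s$), bounding the left side by $\frac Cn\sum_{|u|,|v|,|s|\le n}w(u)w(v)w(s)w(u+v+s)\le\frac Cn\sum_{|z|\le 2n}(w*w)(z)^2$, with $*$ the discrete convolution. If $H<\tfrac12$ then $w\in\ell^1$, hence $w*w\in\ell^1\subset\ell^2$ and the sum is $O(1)$. If $H\in(\tfrac12,\tfrac34)$ then $w(m)\asymp|m|^{2H-2}$ with $2H-2\in(-1,-\tfrac12)$, so the standard power-law convolution estimate gives $(w*w)(z)\asymp|z|^{4H-3}$ and consequently $\sum_{|z|\le 2n}(w*w)(z)^2=O(n^{(8H-5)_+})$, up to a logarithmic factor at $H=\tfrac58$. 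In every case $\sum_{|z|\le 2n}(w*w)(z)^2=o(n)$, precisely because $8H-5<1$; dividing by $n$ gives the claim. (Combined with $\EE(J_n)=\EE\|DG_n\|_{\mathcal H}^2=2\,\EE(G_n^2)\to 2(\al,\be,\ga)\Sigma(\al,\be,\ga)^T$, this also yields the $L^2$-convergence of $J_n$ to a constant that Proposition~\ref{2.2} requires.) Everything else — tracking the $25$ pairs $(l,l')$, the two Wick pairings, and absorbing the shifts $(p_l,q_l)$ into $w$ — is routine bookkeeping.
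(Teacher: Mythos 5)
Your proposal is correct and follows essentially the same route as the paper: realize $J_n$ as a quadratic functional of the Gaussian family with deterministic coefficients $\langle DX_s,DX_t\rangle_{\mathcal H}=\EE(X_sX_t)$, expand the variance by the Wick/Feynman formula of Proposition~\ref{p.feynman_diagram}, and bound the resulting $n^{-2}\sum_{k,k',j,j'}$ of four covariances using the decay $(1+|t-s|)^{2H-2}$ from Lemma~\ref{lemma.a.1}. The only difference is that where the paper declares the vanishing of the cyclic quadruple sum ``elementary,'' you actually prove it via the convolution estimate $\frac1n\sum_{|z|\le 2n}(w*w)(z)^2=o(1)$, which is a welcome (and correct) filling-in of the step where the hypothesis $H<\tfrac34$ is really used.
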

\begin{proof}
It is easy to see that $J_n 
$ is a linear combination of terms of the following forms (with the coefficients being a quadratic forms of $\al, \be, \ga$): 
\begin{eqnarray} 
&&
\tilde J_n:=\frac{1}{n} \sum_{k',k=1}^{n} \langle DX_{k_1 },DX_{k_1' }\rangle _\mathcal{H} X_{k_2  }X_{k_2'  }\nonumber\\
&&\qquad  =\frac{1}{n} \sum_{k',k=1}^{n} \EE (X_{k_1 }X_{k_1'  }) X_{k_2 }X_{k_2' }\,, \label{e.a.11}
\end{eqnarray} 
where $k_1,    k_2 $ may  take $k, k+1, k+2$, and 
$ k_1',  k_2'$ may  take $k', k'+1, k'+2$.  For example, one term is
to take $k_1= k_2=k $ and $k_1'=k'+1$, $k_2'=k' $  which corresponds 
to the product: 
\begin{eqnarray}
&&\langle \frac{1}{\sqrt{n}} 2\alpha \sum_{k=1}^n  X_{k}DX_{k}, \frac{1}{\sqrt{n}} \beta \sum_{k=1}^n ( X_{k}  DX_{k+1} \rangle \nonumber\\
&&\qquad\quad =\frac{2\al\be}{n} \sum_{k',k=1}^{n}\EE(X_k X_{k'+1})X_kX_{k'}=:2\al\be \tilde J_{0, n}\,. 
\end{eqnarray} 
We will first give a detail argument to explain why  
\[
\EE \left[ \tilde J_{0, n}-\EE( \tilde J_{0, n})\right]^2\rightarrow 0 
\]
and then we outline the  procedure that similar claims hold true for any terms in \eqref{e.a.11}. Note that  $\EE( \tilde J_{0, n})$ will not converge to $0$.

From the Proposition \ref{p.feynman_diagram} it follows
\begin{eqnarray*}
\EE \left[ \tilde J_{0, n}-\EE( \tilde J_{0, n})\right]^2
&=& \frac{1}{n^2 } \sum_{k,k', j, j'=1}^{n}\EE(X_k X_{k'+1})
\EE(X_j X_{j'+1})\EE(
X_kX_{j})\EE(X_{k'}X_{j'})\nonumber\\
&&\quad  + \frac{1}{n^2 } \sum_{k,k', j, j'=1}^{n}\EE(X_k X_{k'+1})
\EE(X_j X_{j'+1})\EE(
X_kX_{j'})\EE(X_{k'}X_{j})\nonumber\\
&=:& I_{1, n}+I_{2,n}\,.
\end{eqnarray*}
Using  \eqref{e.a.9} we have 
\begin{eqnarray*} 
I_{1, n}&\le& \frac{1}{n^2 } \sum_{k,k', j, j'=1}^{n}
(1+ |k'-k|)^{2H-2} (1+ |j'-j|)^{2H-2}  
\\
&&\qquad\quad (1+ |j-k|)^{2H-2} (1+ |k'-j'|)^{2H-2}\,;  \\
I_{2, n}&\le& \frac{1}{n^2 } \sum_{k,k', j, j'=1}^{n}
(1+ |k'-k|)^{2H-2} (1+ |j'-j|)^{2H-2}  
\\
&&\qquad\quad (1+ |j'-k|)^{2H-2} (1+ |k'-j |)^{2H-2}\,. 
\end{eqnarray*} 
Now it is elementary to see that $I_{1, n}\rightarrow 0$ 
and $I_{2, n}\rightarrow 0$  when $n
\rightarrow \infty$.   

Now we deal with  the general term  
\[
\tilde J_{1, n}:=\frac{1}{n} \sum_{k',k=1}^{n} \EE (X_{k_1 }X_{k_1'  }) X_{k_2 }X_{k_2' } 
\]
 in \eqref{e.a.11},  
where $k_1,    k_2 $ may  take $k, k+1, k+2$, and 
$ k_1',  k_2'$ may  take $k', k'+1, k'+2$. 
We use  Proposition \ref{p.feynman_diagram} to obtain 
\begin{eqnarray*}
\EE \left[ \tilde J_{1, n}-\EE( \tilde J_{1, n})\right]^2
&=&  \frac{1}{n^2 } \sum_{k,k', j, j'=1}^{n}\EE (X_{k_1 }X_{k_1'  })    \EE (X_{j_1 }X_{j_1'  }) \EE( X_{k_2 }X_{j_2  }) 
\EE( X_{k_2' }X_{j_2'  }) \nonumber\\
&&\quad  +\frac{1}{n^2 } \sum_{k,k', j, j'=1}^{n}\EE (X_{k_1 }X_{k_1'  })    \EE (X_{j_1 }X_{j_1'  }) \EE( X_{k_2 }X_{j_2' }) 
\EE( X_{k_2' }X_{j_2  }) \nonumber\\ 
&=:& \tilde I_{1, n}+\tilde I_{2,n}\,, 
\end{eqnarray*} 
where $k_1,    k_2   $ may  take $k, k+1, k+2$, and 
$ k_1',  k_2'   $ may  take $k', k'+1, k'+2$,
$j_1,    j_2   $ may  take $j, j+1, j+2$, and 
$ j_1',  j_2' $ may  take $j', j'+1, j'+2$. 
 Using  \eqref{e.a.9} we have 
\begin{eqnarray*} 
\tilde I_{1, n}&\le& \frac{1}{n^2 } \sum_{k,k', j, j'=1}^{n}
(1+ |k'-k|)^{2H-2} (1+ |j'-j|)^{2H-2}  
\\
&&\qquad\quad (1+ |j-k|)^{2H-2} (1+ |k'-j'|)^{2H-2} \,; \\
\tilde I_{2, n}
&\le& \frac{1}{n^2 } \sum_{k,k', j, j'=1}^{n}
(1+ |k'-k|)^{2H-2} (1+ |j'-j|)^{2H-2}  
\\
&&\qquad\quad (1+ |j'-k|)^{2H-2} (1+ |k'-j |)^{2H-2}\,. 
\end{eqnarray*} 
Now it is elementary to see that $I_{1, n}\rightarrow 0$ 
and $I_{2, n}\rightarrow 0$  when $n
\rightarrow \infty$.    
\end{proof}

 \bigskip 
\section{Determinant of the Jacobian   of $f$}\label{B}

In this section we plot the determinant of the Jacobian of $J$. The determinant is plotted as a function of two parameters when the third one is fixed. Roughly, what these figures show is that the determinant will approach zero as $\theta$ goes to $\infty$ and as $\sigma$ goes to $0$ (See Fig \ref{fig:1} and Fig \ref{fig:2}). They also show that the determinant becomes negative when $H$ is roughly smaller that $0.3$ or too close to $1$ (See Fig \ref{fig:3} and Fig \ref{fig:4}). 

Therefore, if we suppose that $0.3<H<\frac{3}{4}$, since the true value of the parameter $\theta$ is not infinite and $\sigma>0$, we should be able to say that the determinant of $J(\theta,H,\sigma)$ is positive in the neighborhood of $(\theta_0,H_0,\sigma_0)$.

\begin{figure}[h!]
    \centering
    \includegraphics[scale=0.4]{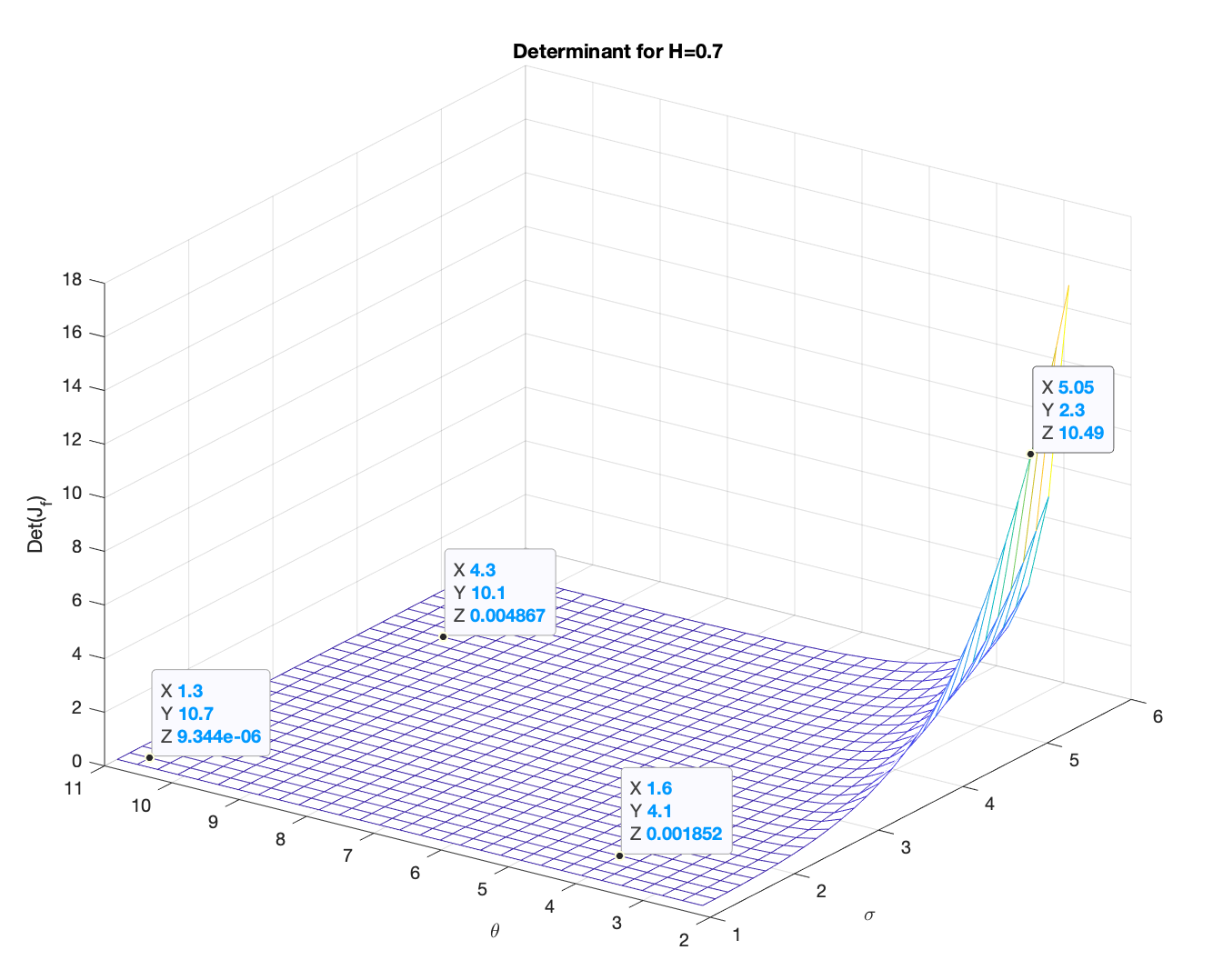}
    \caption{Determinant of the jacobian matrix of $f$ for $H =0.7$}
    \label{fig:1}
\end{figure}

\begin{figure}[h!]
    \centering
    \includegraphics[scale=0.4]{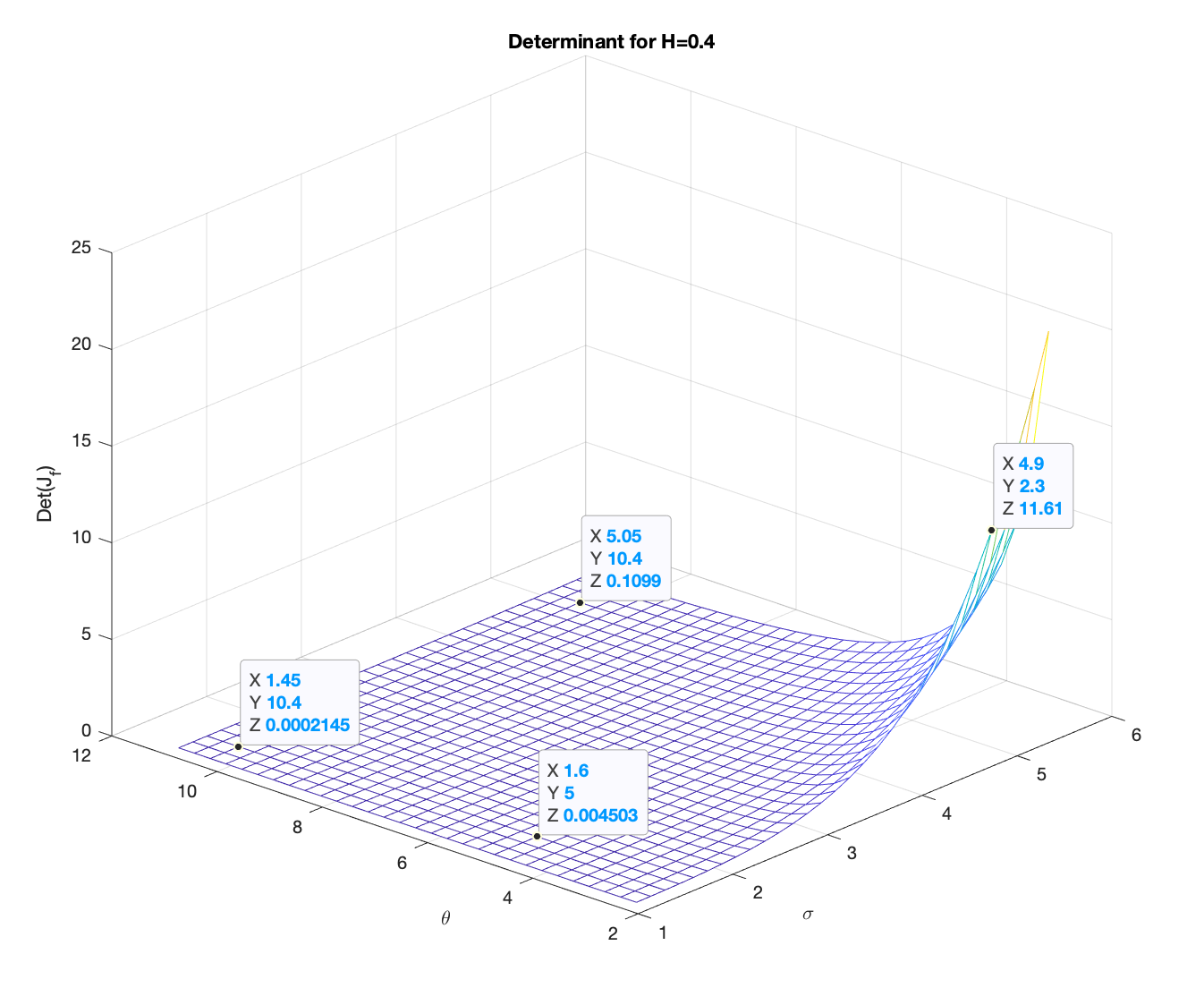}
    \caption{Determinant of the jacobian matrix of $f$ for $H = 0.4$ }
    \label{fig:2}
\end{figure}

\begin{figure}[h!]
    \centering
    \includegraphics[scale=0.5]{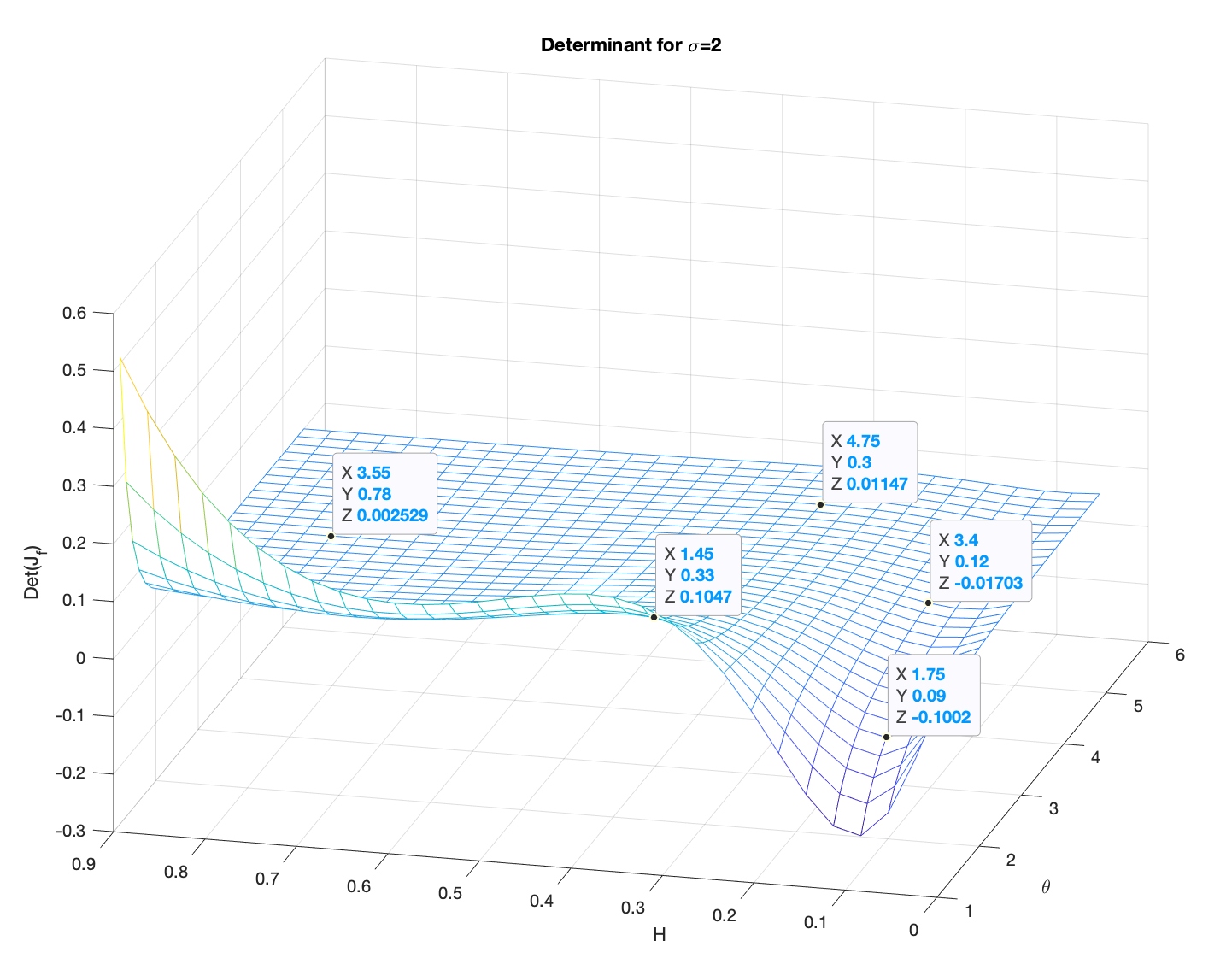}
    \caption{Determinant of the jacobian matrix of $f$ for $\sigma =2$}
    \label{fig:3}
\end{figure}

\begin{figure}[h!]
    \centering
    \includegraphics[scale=0.45]{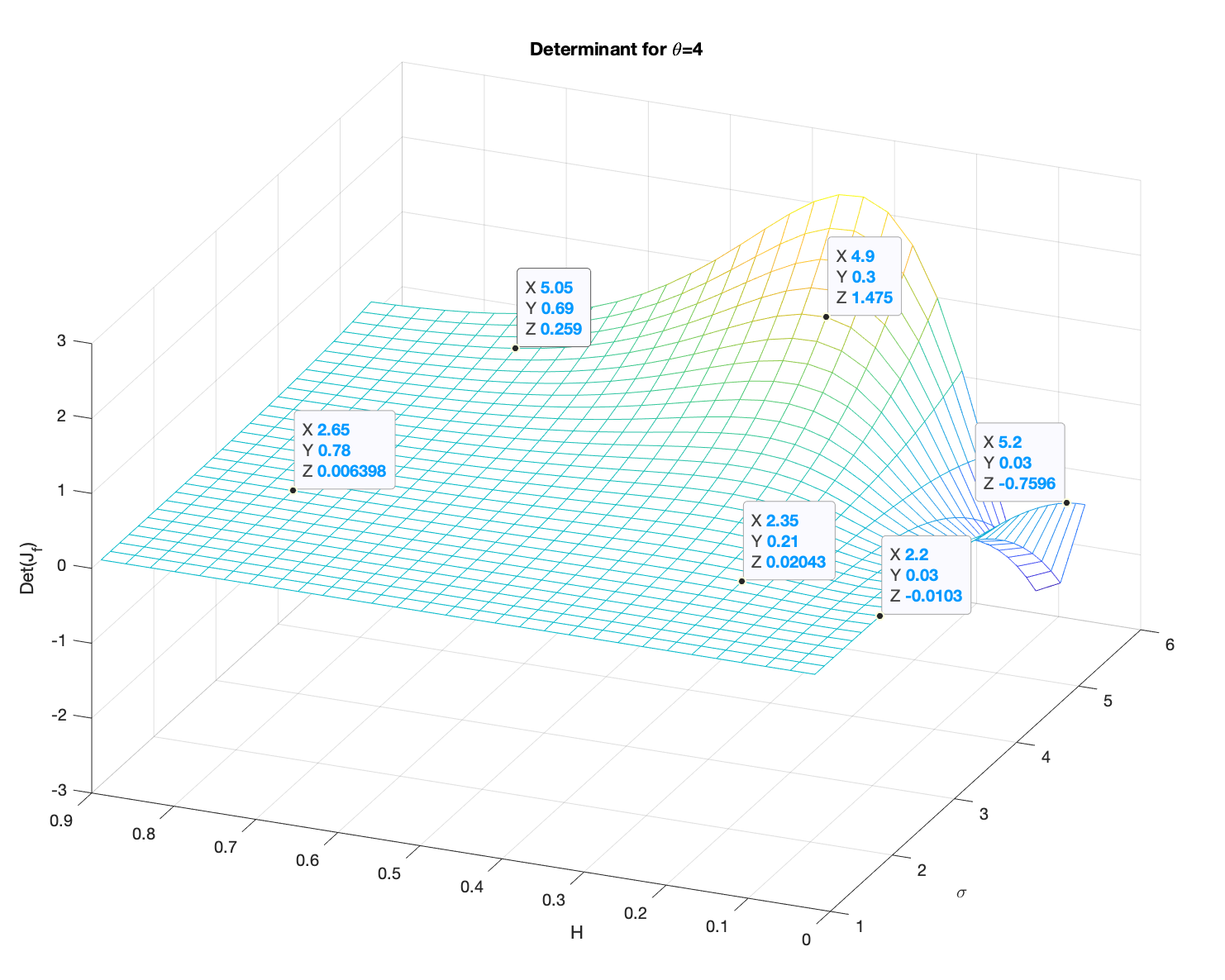}
    \caption{Determinant of the jacobian matrix of $f$ for $\theta =5$}
    \label{fig:4}
\end{figure}

\section{Numerical results}\label{C}

\subsection{Strong consistency of the estimators}
In this subsection, we illustrate the almost-sure convergence by plotting different trajectories of the estimators.  
We observe  that when $log_2(n) \ge 14$, the estimators become very close to the true parameter. 

However, since our estimators are random (they depend on the sample $\{X_{kh}\}_{k=1}^n$), what's important to see in these figures is the deviations from the true parameter we are estimating. Even if three trajectories are not enough to make statements about the variance, the figures predict that the variance of $\tilde{\theta}_n$ is very high compared to the other estimators (see Fig \ref{fig:5} and Fig \ref{fig:6})and that, for $H$ close to $0$ (see Fig \ref{fig:7}), the deviations of $\tilde{H}_n$ increase.

\begin{figure}[h!]
    \centering
    \includegraphics[scale=0.4]{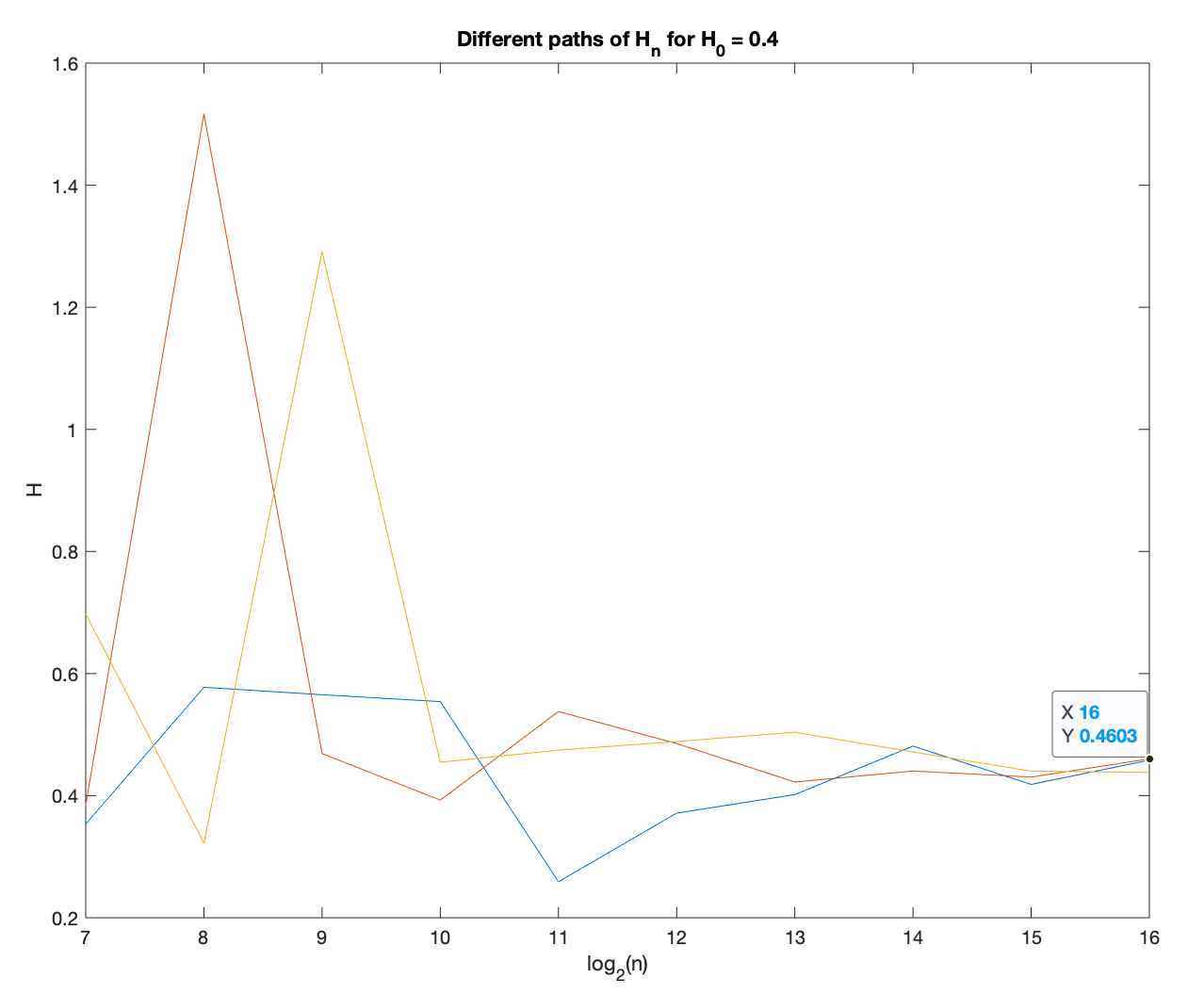}
    \includegraphics[scale=0.4]{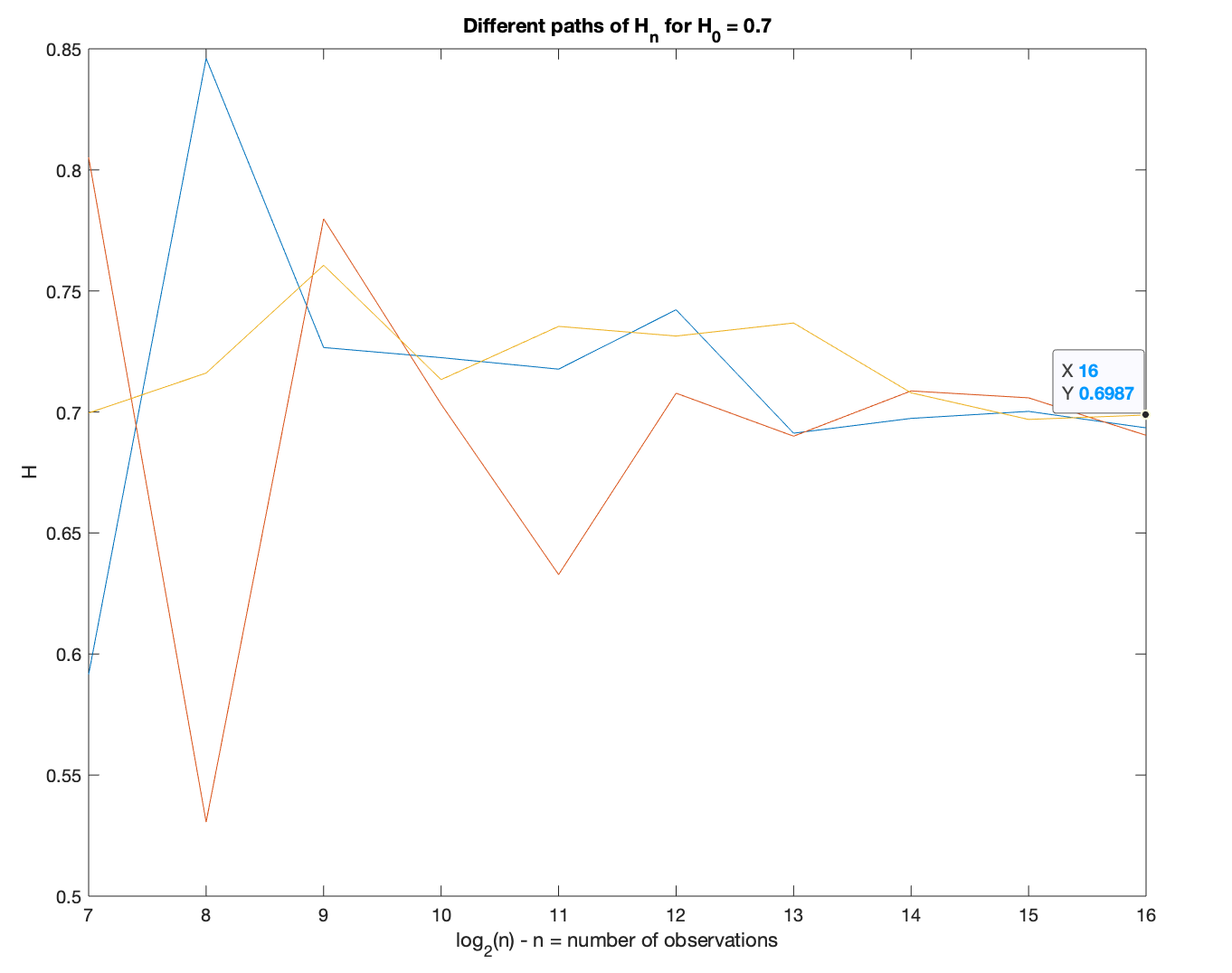}
    \caption{Convergence of $\widetilde{H_n}$ for $H =0.7$ and $H =0.4$}
    \label{fig:5}
\end{figure}

\begin{figure}[h!]
    \centering
    \includegraphics[scale=0.42]{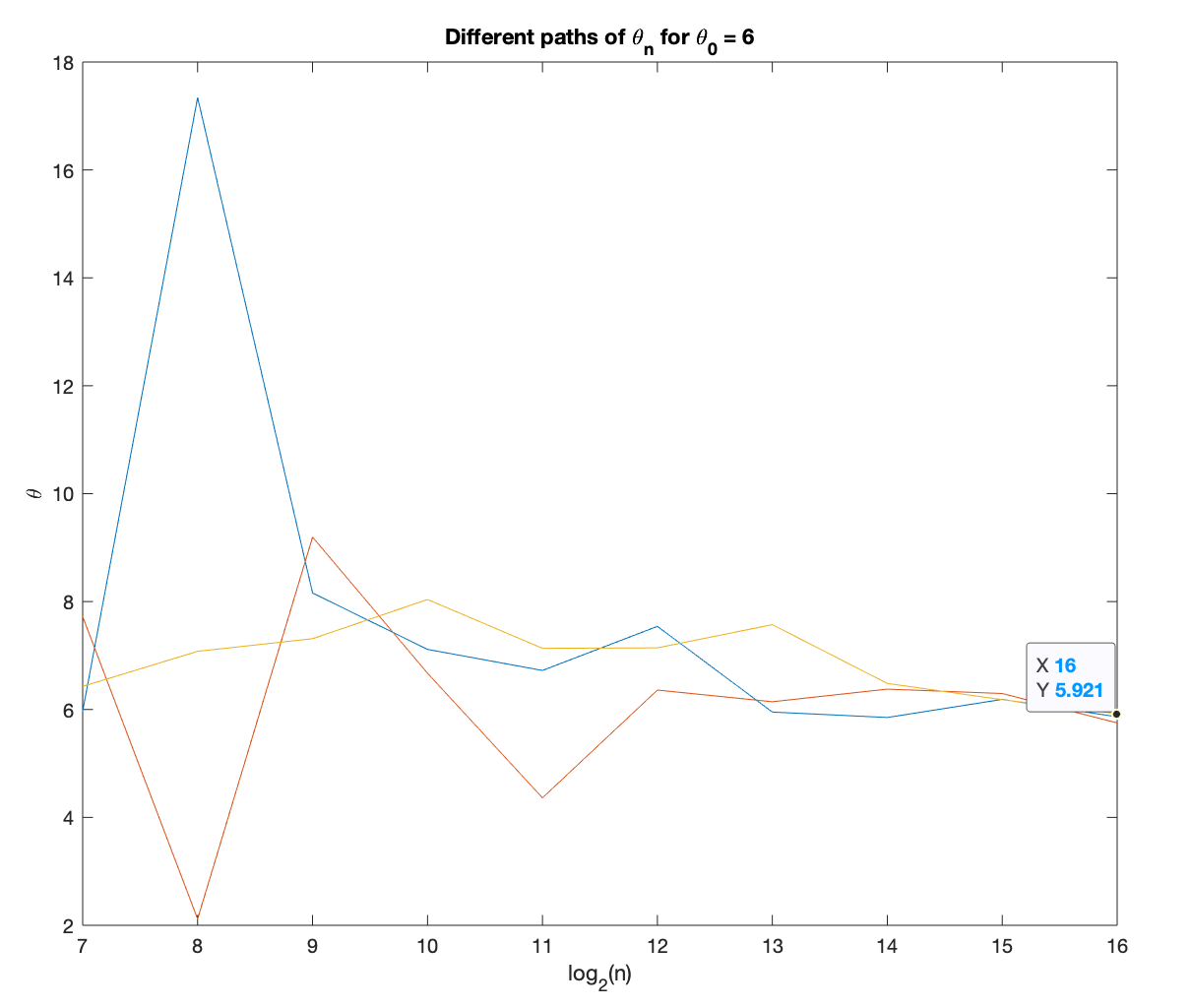}
    \caption{Convergence of $\widetilde{\theta_n}$ for $\theta =6$}
    \label{fig:6}
\end{figure}

\begin{figure}[h!]
    \centering
    \label{Fig7}
    \includegraphics[scale=0.46]{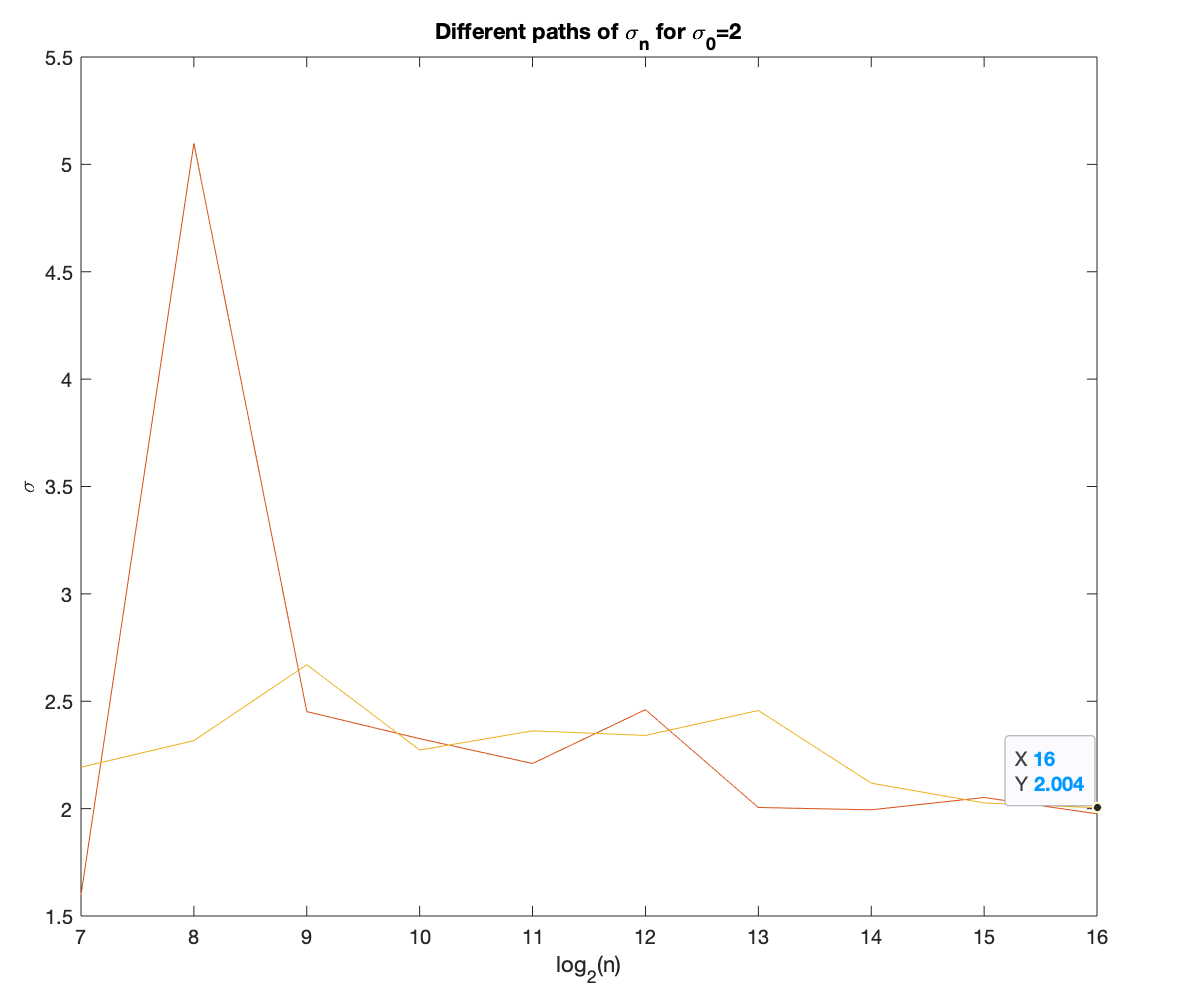}
    \caption{Convergence of $\widetilde{\sigma_n}$ for $\sigma =2$}
    \label{fig:7}
\end{figure}

\subsection{Mean and standard deviation/Asymptotic behavior of the estimators}

It is important to check the mean and deviation of our estimators. For example, a large variance implies a large deviation and therefore a ``weak" estimator. That is why we plotted the mean and variance of our estimators for $n=2^{12}$ over $100$ samples.  

As we observe, the standard deviation (s.d) of $\tilde{\theta}_n$ is larger than the s.d of $\tilde{sigma}_n$ which is larger than the s.d of $\tilde{H}_n$ (see table \ref{table1} and table \ref{table2}). Notice also that the s.d of $\tilde{H}_n$ increases as $H$ decreases.  

In \cite{hu2013parameter}, the variance of the $\theta$ estimator is proportional to $\theta^2$. In our case, it is difficult to compute the variances of our estimators (they depend on the matrix $\Sigma$ (see Theorem \ref{theorem4.3}) and the Jacobian of the function $f$ (see equation \eqref{e.3.9}), however we should probably expect something similar which could explain the gap in the variances since the values of $\theta$ are usually bigger that the values taken by $\sigma$ or $H$.

Having access to $100$ estimates of each parameter, we   are  also   able to plot the distributions of our estimators to show that they effectively have a Gaussian nature (\ref{theorem4.4}). (Fig \ref{fig:8}, Fig \ref{fig:9} and Fig \ref{fig:10}).

\begin{table}[h!]
\begin{center}
  \caption{$H =0.7$,$\theta =6$ and $\sigma=2$}
  \begin{tabular}{ l | c | r }
    
                           & Mean & Standard deviation \\ \hline
    $\widetilde{H_n}   $  &    0.704   & 0.0221 \\ \hline
    $\widetilde{\theta_n}$ &   6.2983   & 0.8288 \\ \hline
    $\widetilde{\sigma_n}$ & 2.0921  &  0.2117 \\ \hline
    \hline
  \end{tabular}
  \label{table1}

\end{center}
\end{table}

\begin{table}[h!]
\begin{center}
 \caption{$H =0.4$,$\theta =6$ and $\sigma=2$}
  \begin{tabular}{ l | c | r }
    
                           & Mean & Standard deviation \\ \hline
    $\widetilde{H_n}   $  &    0.4392   & 0.0531 \\ \hline
    $\widetilde{\theta_n}$ & 6.832  &  1.3227 \\ \hline
    $\widetilde{\sigma_n}$ &   2.4785   & 0.3833 \\ \hline
    \hline
  \end{tabular}
  \label{table2}
 
\end{center}
\end{table}

\begin{figure}[h!]

    \includegraphics[scale=0.36]{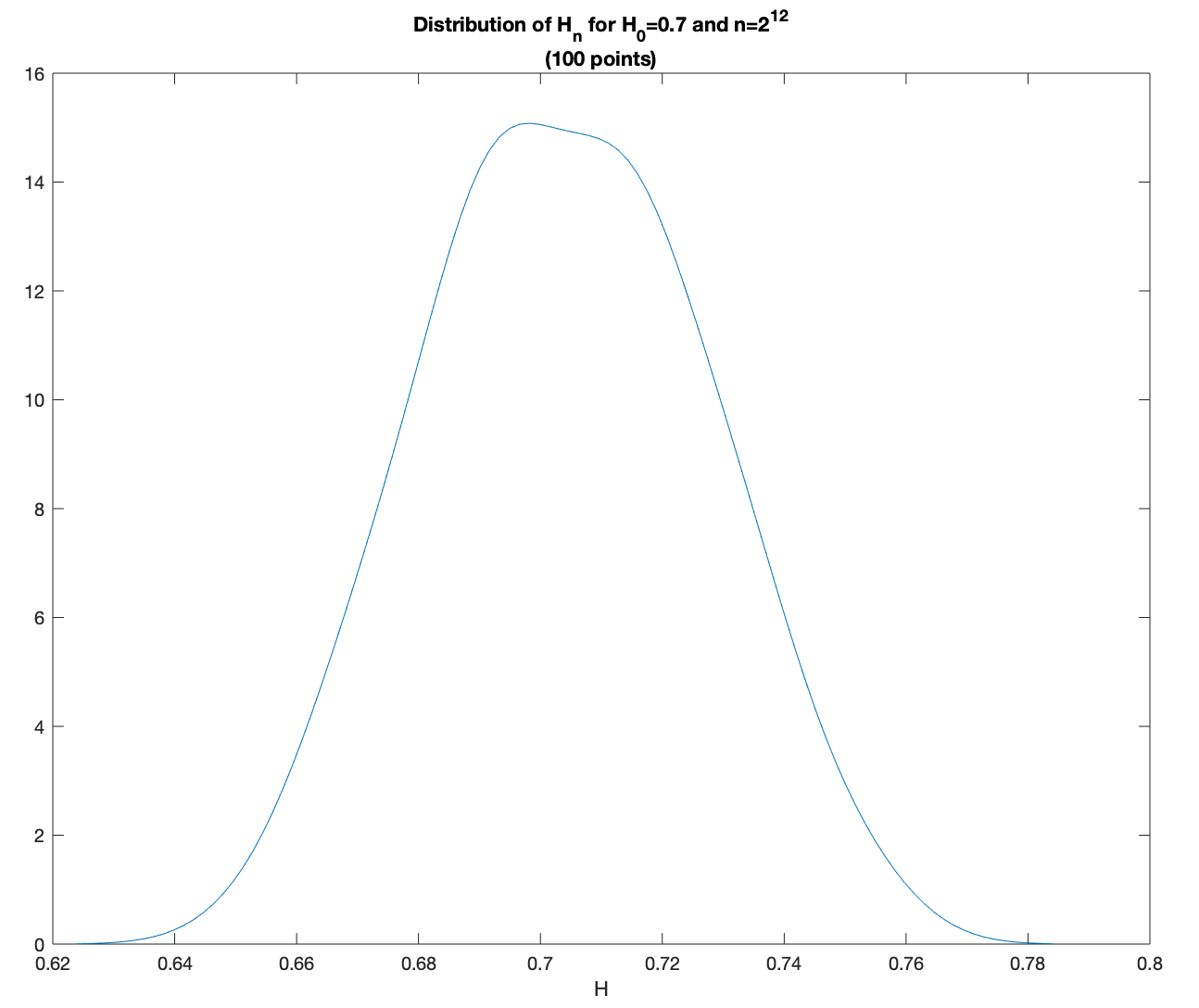}
    \includegraphics[scale=0.38]{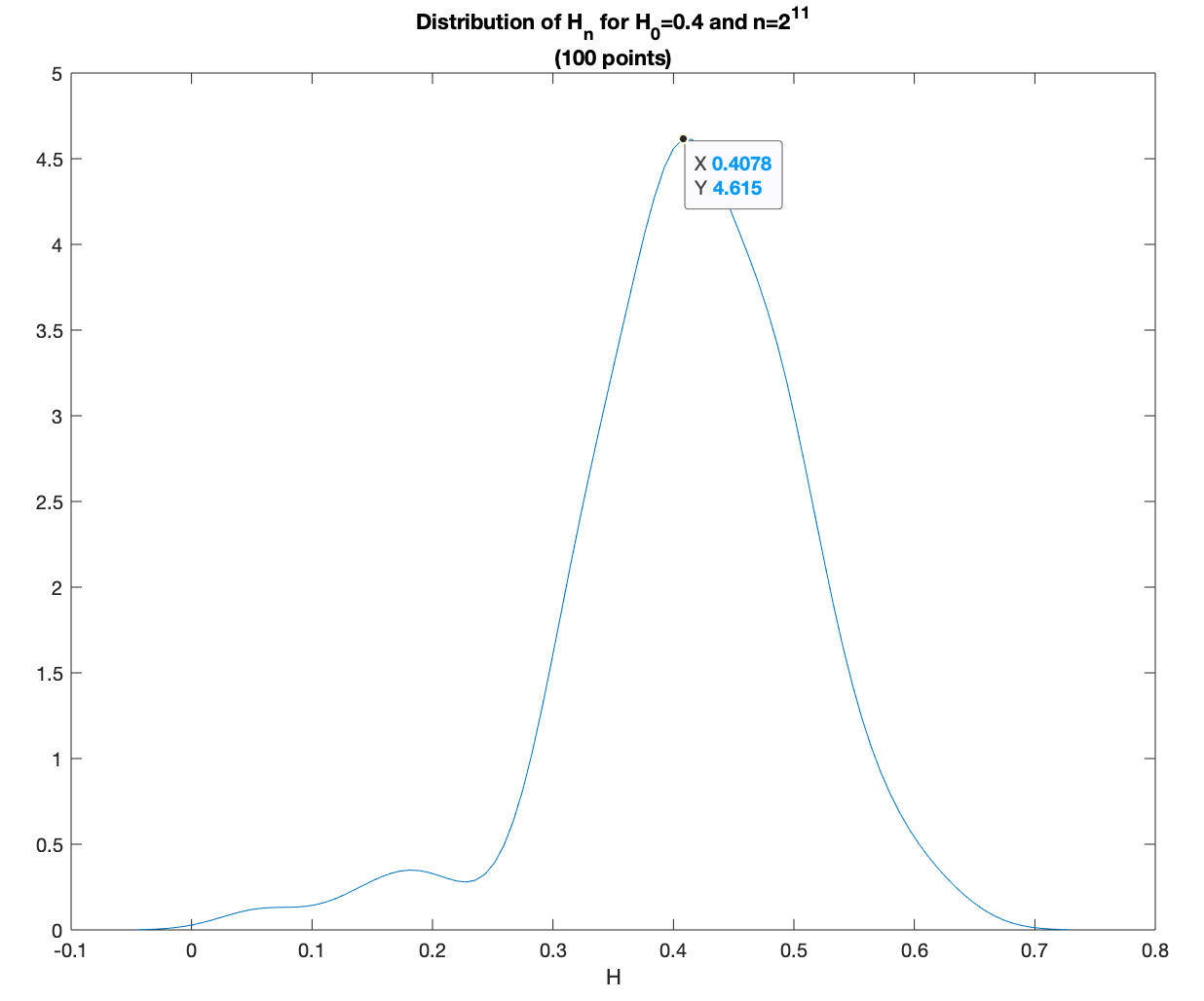}
    \caption{Distribution of $\widetilde{H_n}$ for $H =0.7$ and $H=0.4$ }
    
    \label{fig:8}
\end{figure}

\begin{figure}[h!]
    \centering
    \includegraphics[scale=0.5]{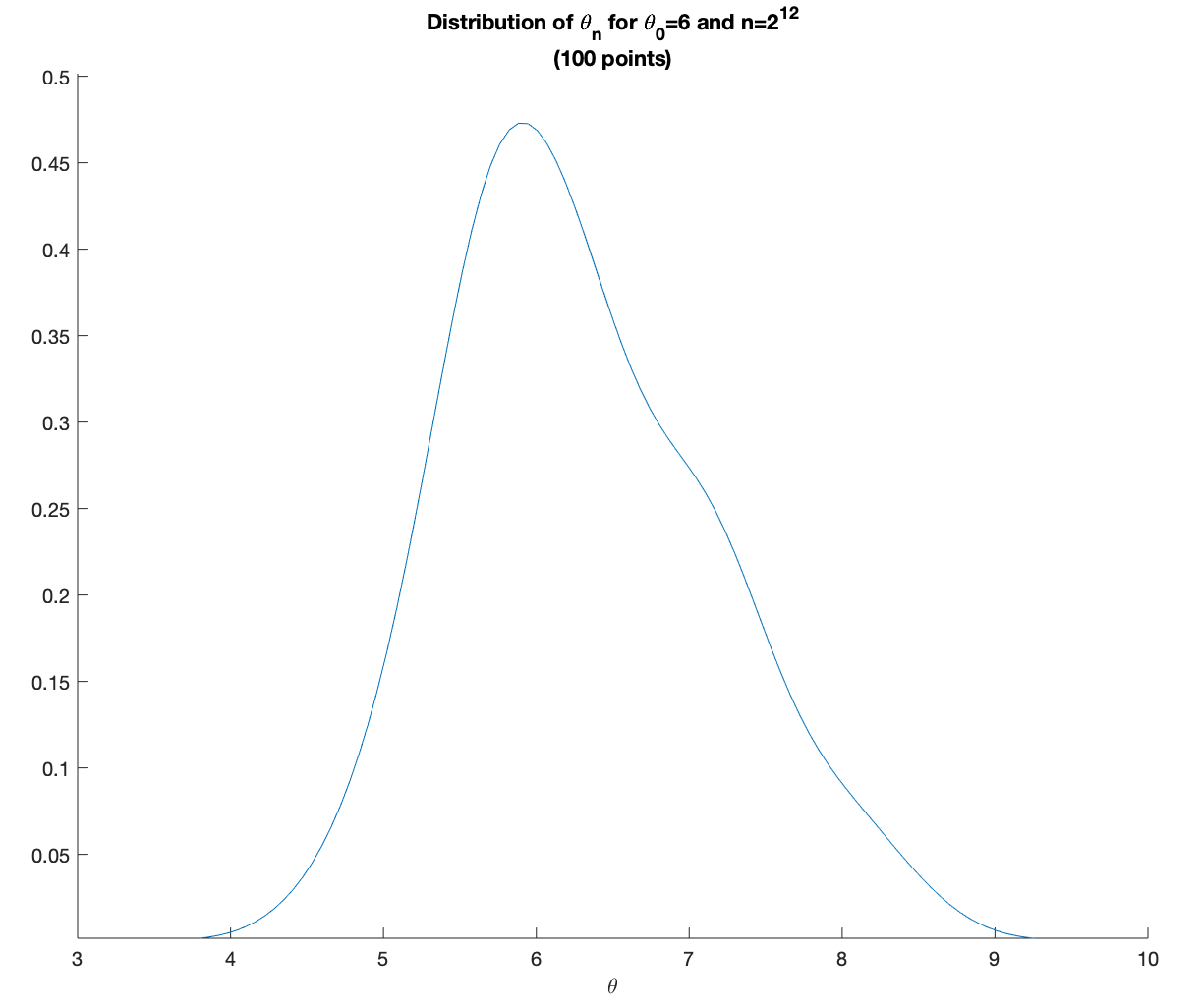}
    \caption{Distribution of $\widetilde{\theta_n}$ for $\theta =6$}
    \label{fig:9}
\end{figure}

\begin{figure}[h!]
    \centering
    \includegraphics[scale=0.5]{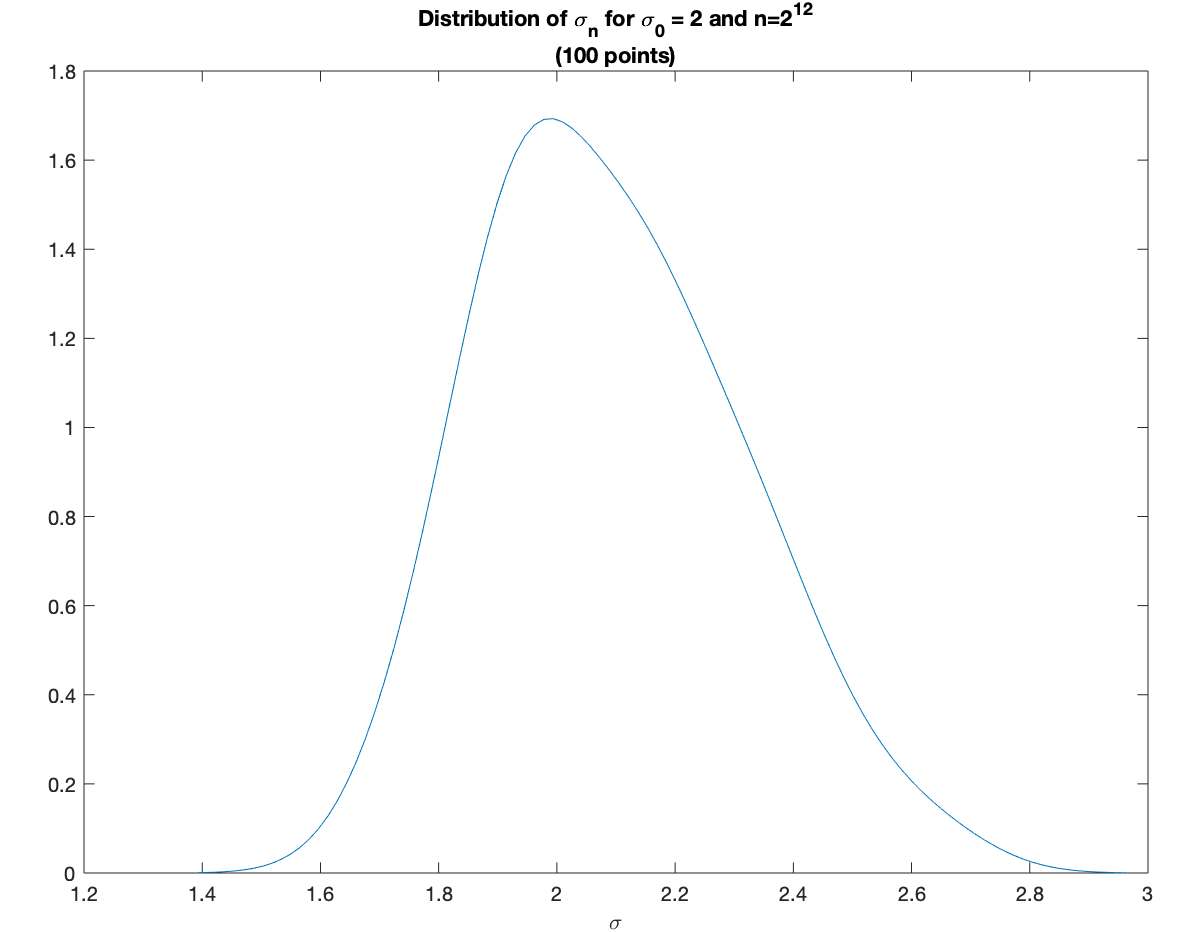}
    \caption{Distribution of $\widetilde{\sigma_n}$ for $\sigma =2$}
    \label{fig:10}
\end{figure}

\end{document}